\newcommand{\Q}{\mathbb{Q}}
\newcommand{\Z}{\mathbb{Z}}
\newcommand{\F}{\mathbb{F}}
\DeclareMathOperator{\characteristic}{char}
\DeclareMathOperator{\Disc}{Disc}
\DeclareMathOperator{\Jac}{Jac}
\DeclareMathOperator{\Gal}{Gal}
\DeclareMathOperator{\End}{End}
\DeclareMathOperator{\ord}{ord}
\DeclareMathOperator{\Tr}{Tr}
\renewcommand{\to}{\rightarrow}
\newcommand{\onto}{\twoheadrightarrow}
\newcommand{\longto}{\longrightarrow}
\newcommand{\iso}{\cong}
\newcommand{\lp}{\left(}
\newcommand{\rp}{\right)}
\newcommand{\<}{\left\langle}
\renewcommand{\>}{\right\rangle}
\newcommand{\vs}{\vspace{3mm}}
\newcommand{\svs}{\vspace{1.5 mm}}
\newtheorem{theorem}{Theorem}
\newtheorem{lemma}[theorem]{Lemma}
\newtheorem{proposition}[theorem]{Proposition}
\newtheorem{corollary}[theorem]{Corollary}
\theoremstyle{definition} 
\newtheorem{definition}[theorem]{Definition}
\newtheorem{example}[theorem]{Example}
\newtheorem{remark}[theorem]{Remark}
\newtheorem*{notation}{Notation}
\title{Distortion maps for elliptic curves over finite fields}
\author{Nikita Andrusov, Sevag Büyüksimkeşyan, Dimitrios Noulas, Fabien Pazuki,\\
Mustafa Umut Kazancıoğlu, Jordi Vilà-Casadevall}
\address{School of Applied Mathematics and Computer Science, Moscow Institute of Physics and Technology, 141701, Institutskiy per. 9, Dolgoprudny, Moscow Region, Russia.}
\email{andrusov.n@gmail.com}
\address{Mathematical Institute, University of Münster, Einsteinstr. 62, 48149 Münster, Germany.}
\email{sevag.buyuksimkesyan@uni-muenster.de}
\address{Department of Mathematics, National and Kapodistrian University of Athens, Panepistimioupolis, 15784 Athens, Greece.}
\email{dnoulas@math.uoa.gr}
\address{Department of Mathematical Sciences, University of Copenhagen,
Universitetsparken 5, 
2100 Copenhagen \O, Denmark.}
\email{fpazuki@math.ku.dk}
\address{Bernoulli Institute, Rijksuniversiteit Groningen, Nijenborgh 9, 9747 AG Groningen, The Netherlands
\\ 
Faculty of Engineering and Natural Sciences, 
Sabanc{\i} University, 
34956 \.Istanbul, Turkey.}
\email{m.u.kazancioglu@rug.nl, mustafa.kazancioglu@sabanciuniv.edu}
\address{School of Mathematics, University of Bristol, BS8 1UG Bristol, UK.}
\email{jordi.vilacasadevall@bristol.ac.uk}
\begin{document}

\begin{abstract}
The Weil pairing on elliptic curves has deep links with discrete logarithm problems, as can be seen, for instance, in \cite{Sch16, MOV93}. In practice, to better suit the functionalities of cryptosystems, one often needs to modify the original Weil pairing via what is called a distortion map. We propose a study on the question of the existence of distortion maps for elliptic curves over finite fields. We revisit results from the literature and provide detailed proofs. We also propose new perspectives at times.
\end{abstract}

\maketitle

{\flushleft
\textbf{Keywords:} Elliptic curves, Weil pairings, distortion maps.\\
\textbf{Mathematics Subject Classification:} 11G20, 11T71, 14G15, 14G50, 14H52, 94A60.}

%11G20 Curves over finite and local fields
%11T71 Algebraic coding theory; cryptography (number-theoretic aspects)
%14G15 Finite ground fields in algebraic geometry
%14H52 Elliptic curves
%14G50 Applications to coding theory and cryptography of arithmetic geometry
%94A60 Cryptography

\section{Introduction}

%Structure of the introduction:\\
%Landscape (what is known)\\
%Answering the question: why do we write this?\\
%Link with algorithmic questions (link with Schoof's work)\\
%Link with cryptography\\
%Plan of this text\\

%Efficient computations of the number of points on elliptic curves over finite fields are explained in \cite{Sch93}.
%Efficient computations of the Weil pairing are given by Miller in \cite{Mil04}.

Elliptic curves over finite fields are a cornerstone of modern public-key cryptography. Their main advantage over classical cryptographic systems, which use multiplicative groups of finite fields, lies in the apparent difficulty of the Elliptic Curve Discrete Logarithm Problem (DLP). Because no sub-exponential algorithm is known for solving the DLP in most cases, elliptic curve cryptography (ECC) achieves a comparable level of security with much smaller key sizes. This efficiency makes ECC particularly appealing for applications with limited computational or storage resources.\svs

Beyond the DLP, many cryptographic protocols, such as the Diffie-Hellman key exchange, rely on the hardness of the related Decision Diffie-Hellman (DDH) problem. For a cyclic group $G = \langle P \rangle$ of order $m$, the DDH problem is to computationally distinguish tuples of the form $(aP, bP, abP)$ from random tuples of the form $(aP, bP, cP)$. For an elliptic curve $E$ over a finite field $\mathbb{F}_q$, a powerful tool for analyzing the DDH problem is the Weil pairing. The Weil pairing $e_m: E[m] \times E[m] \to \mu_m$ is a bilinear, non-degenerate map, where $E[m]$ stands for the $m$-torsion subgroup of the rational points of the curve $E(\overline{\mathbb{F}}_q)$, and where $\mu_m$ is the group of $m$-th roots of unity.\svs

 A critical property of the Weil pairing is that it is alternating, meaning $e_m(Q, Q) = 1$ for any point $Q \in E[m]$. This property renders the Weil pairing thus useless for solving the DDH problem within the cyclic subgroup $\langle P \rangle$. We have $e_m(aP, bP) = e_m(P, P)^{ab}  = 1$ and $e_m(P, cP) = e_m(P, P)^c =  1$, so the pairing is incapable of distinguishing the valid Diffie-Hellman tuple. This limitation motivates the central question of this paper. The concept of a \textbf{distortion map} was introduced to overcome this problem. A distortion map is an endomorphism $\phi: E \to E$ with the property that for a point $P$ of order $m$, its image $\phi(P)$ does not lie in the cyclic subgroup generated by $P$; that is, $\phi(P) \notin \langle P \rangle$.\svs

The existence of such a map allows for the construction of a modified, non-degenerate pairing $e_{m,\phi}$ defined by $e_{m, \phi}(P, Q) = e_m(P, \phi(Q))$ for all $P,Q\in E[m]$. The bilinearity of the Weil pairing, combined with the fact that $\{P, \phi(P)\}$ can form a basis for $E[m]$, ensures that $e_{m, \phi}(P, P)$ is a primitive $m$-th root of unity. This modified pairing can be used to efficiently solve the DDH problem in $\langle P \rangle$ by simply checking if $e_{m, \phi}(aP, bP) \stackrel{?}{=} e_{m, \phi}(P, cP)$. Given their critical role in determining the hardness of the DDH problem, a complete characterization of when distortion maps exist is a fundamental question of both theoretical and practical importance.\svs

This paper provides cumulative information from the literature related to the existence of distortion maps and their properties. In addition to surveying known results, it provides explicit examples, clarified proofs, and new perspectives on several key phenomena; for instance, how distortion maps can be transferred via isogenies from one curve to another, as well as how to find a criteria for determining when a given isogeny is a distortion map.\svs

The existence of a distortion map is fundamentally an algebraic question about the structure of the curve's endomorphism ring, $\End(E)$. As such, it is linked with properties of its Frobenius endomorphism, $\pi_q: (x, y) \mapsto (x^q, y^q)$, and with classical notions from algebraic number theory: orders in quadratic number fields, quaternions, splitting properties of ideals, \textit{etc}. There is a nice connection between the study of $\End(E)$ and Ren\'e Schoof's work, we refer to \cite{Sch85, Sch93, Sch16} for some explicit links. In algorithmic number theory, we notably have Schoof's algorithm \cite{Sch93}. It is used to compute efficiently the number of points on an elliptic curve $E/\F_q$. Its primary computation is finding the trace of the Frobenius endomorphism, denoted $t$.  The number of points is then derived from this trace by Hasse's theorem, which reads $\# E(\F_q) = q + 1 - t$. In most successful implementations of protocols involving elliptic curves, being able to access the cardinality of $E(\mathbb{F}_q)$ is crucial.\svs

The cryptographic consequences of distortion maps are profound. On one hand, for any cryptographic system whose security relies on the hardness of the DDH problem in a subgroup $\langle P \rangle$, the existence of a distortion map for $\langle P \rangle$ is a weakness, as it renders the problem easier. For these applications, one must specifically choose curves that are guaranteed not to have distortion maps. On the other hand, the entire field of pairing-based cryptography, which enables advanced constructions such as identity-based encryption, short signatures, and attribute-based encryption, requires an efficiently computable, non-degenerate bilinear pairing. Distortion maps are one of the primary mechanisms used to construct these necessary pairings. Therefore, understanding the existence of distortion maps is essential for both the construction of cryptographic schemes and for cryptographic security analysis.\svs

Section \ref{Section 2} establishes the mathematical foundation by introducing distortion maps, alongside some of the key concepts for their study. It provides a criterion in Proposition \ref{Proposition Legendre symbol} for determining for which points a given isogeny is a distortion map, it details the properties of the modified Weil pairing in Proposition \ref{Properties modified Weil pairing} and it demonstrates how isogenies can be used to transfer distortion maps between elliptic curves in Proposition \ref{Prop transfer}. Following this, Section \ref{Section 3} investigates the case of ordinary elliptic curves, introducing Verheul's theorem and applying the criterion from the previous section to characterize the existence of distortion maps. Section \ref{Section 4} turns to the supersingular case, where Tate's theorem is used to prove the existence of distortion maps for $E[m]$. Section \ref{Section 5} reviews algorithmic constructions, specifically those relying on isogeny cycles and the constructive Deuring correspondence, while also summarizing known distortion maps for both supersingular and ordinary curves. Finally, Section \ref{Section 6} extends the framework to higher dimensions, examining briefly  distortion maps on jacobians of curves and supersingular abelian varieties.

\section*{Acknowledgments}
We thank \href{https://www.cimpa.info/en/home}{CIMPA} for organizing the summer school ``Elliptic curves and their applications'' in Yerevan during the period July 14-26, 2025. This is where we started working on the topic of distortion maps. We are grateful to the \href{http://math.sci.am/}{Institute of Mathematics of National Academy of Sciences of the Republic of Armenia} for hosting the school. Many thanks to Diana Davidova, Tigran Hakobyan, Valentijn Karemaker, Mihran Papikian, and also to Erenay Boyalı, Mahabba El Sahili, and Rahim Rahmati-Asghar, for nice mathematical discussions during the CIMPA school.\svs

J. Vilà-Casadevall was supported by a ``la Caixa'' Foundation Postgraduate Fellowship. F. Pazuki was supported by ANR {\it Jinvariant} (ANR-20-CE40-0003).

%%%%%%%%%%%%

\section{Distortion maps and modified Weil pairing}\label{Section 2}

\subsection{Distortion maps}
Let $E$ be an elliptic curve defined over $\mathbb{F}_q$, where  $q = p^n$, $p$ is a prime and $n\geqslant1$ is an integer. We denote by $\End(E)$ the (geometric) endomorphism ring of $E$, \textit{i.e.}, the ring of isogenies $E \to E$ defined over $\overline{\F}_q$. This ring can have two possible structures:
\begin{itemize}
    \item $\End(E)$ is an order in an imaginary quadratic field.
    \item $\End(E)$ is an order in a definite quaternion algebra over $\Q$.
\end{itemize}

In the first case, we say that $E$ is \textit{ordinary}. In the second case, we say that $E$ is \textit{supersingular}. The trace of the $q$-th power Frobenius endomorphism is a multiple of $p$  if and only if $E$ is supersingular (see, for example, \cite[Proposition 4.31]{Was08}).

\begin{definition}
Let $P \in E \lp \overline{\F}_q \rp$ be a point of order $m > 1$ coprime to $p$. A \textit{distortion map for $P$} (or, equivalently, for the cyclic group $\< P \>$) is an isogeny $\phi \in \End(E)$ such that $\phi(P) \notin \< P \>$. A \textit{distortion map for $E[m]$} is an isogeny $\phi \in \End(E)$ such that $\phi(P) \notin \< P \>$ for every $P \in E[m] \setminus \{O\}$.
\end{definition}

As we will see in the following sections, the structure of the endomorphism ring of $E$ plays a crucial role when studying the existence of distortion maps on $E$. Another important notion when studying distortion maps for a point $P \in E(\F_q)$ of prime order $\ell$ is what we call the embedding degree.

\begin{definition} For a prime $\ell \neq p$, the \textit{embedding degree} $k(q,\ell)$ is defined to be the order of $q$ in the multiplicative group $(\Z/\ell\Z)^\times$.
\end{definition}

For an integer $m > 1$ coprime to $p$, we denote by $\mu_m$ the group of $m$-th roots of unity in $\overline{\F}_q^\times$ and by $e_m:E[m] \times E[m] \to \mu_m$ the Weil pairing on $E[m]$.

\begin{remark}
The embedding degree $k(q,\ell)$ is the smallest $k \geqslant 1$ for which $\mu_\ell \subseteq \F_{q^k}^\times$. If $E[\ell] \subseteq E(\F_q)$ for some prime $\ell \neq p$, since the Weil pairing is Galois-invariant, $\mu_\ell \subseteq \F_q^\times$, and hence $k(\ell,q) = 1$. On the other side, if $\ell \mid \# E(\F_q)$ and $k(\ell,q) > 1$, a theorem of Balasubramanian and Koblitz \cite[Theorem 1]{BK98} says that $k(\ell,q)$ is the minimal $k$ such that $E[\ell] \subseteq E(\F_{q^k})$.
\end{remark}

Any isogeny $\phi \in \End(E)$ induces a $\Z/\ell\Z$-linear endomorphism of $E[\ell]$ by restriction, which we denote by $\phi \vert_{E[\ell]}$. For a point $P \in E[\ell]$, the condition $\phi(P) \notin \<P\>$ is equivalent to saying that $P$ is not an eigenvector of $\phi \vert_{E[\ell]}$. Charles \cite{Cha06} used this idea and the ramification of primes in imaginary quadratic fields to study the existence of distortion maps for ordinary elliptic curves. His methods can be extended to the case of supersingular elliptic curves. To present these, we recall the definition of the Legendre symbol.

\begin{definition}
Let $D$ be an integer and $\ell$ a prime. For $\ell > 2$, we define the \textit{Legendre symbol} by
$$
\lp \frac{D}{\ell} \rp =
\left\{\begin{matrix*}[l]
\ 1 & \quad \text{if } D \text{ is a quadratic residue modulo } \ell, \text{ and } \ell \nmid D, \\
-1 & \quad \text{if } D \text{ is a quadratic non-residue modulo } \ell, \\
\ 0 & \quad \text{if } \ell \mid D.
\end{matrix*}\right.
$$

For $\ell = 2$, we set $\lp \frac{D}{2} \rp = 1$ for $D \equiv \pm 1 \textup{ mod } 8$, $\lp \frac{D}{2} \rp = -1$ for $D \equiv \pm 3 \textup{ mod } 8$, and $\lp \frac{D}{2} \rp = 0$ if $2 \mid D$.
\end{definition}

\begin{remark}
It is a well-known fact in algebraic number theory that, if $K$ is a quadratic imaginary number field with fundamental discriminant $D_K$, then a prime $\ell$ splits in $K$ if $\lp \frac{D_K}{\ell} \rp = 1$, ramifies in $K$ if $\lp \frac{D_K}{\ell} \rp = 0$, and remains inert in $K$ if $\lp \frac{D_K}{\ell} \rp = -1$. In particular, since it always holds that $D_K\equiv0,1 \mod{4}$, the prime $\ell=2$ is split if $D_K\equiv 1\mod{8}$, is inert if $D_K\equiv 5 \mod{8}$ and ramifies if $2 \mid D_K$.
\end{remark}

Let $B = \End(E)\otimes_\Z \Q$ be the endomorphism algebra of $E$, which is either an imaginary quadratic field or a definite quaternion algebra over $\Q$. In both cases, for a given isogeny $\phi \in \End(E) \setminus \Z$, one has that $\Z[\phi]\otimes_\Z \Q$ is imaginary quadratic subfield of $B$, which we denote by $K_\phi$. Then $\Z[\phi]$ is an order in $K_\phi$.

\begin{proposition}\label{Proposition Legendre symbol}
Let $\ell \neq p$ be a prime and $\phi \in \End(E) \setminus \Z$ be an isogeny. Let $D_\phi$ be the discriminant of $\Z[\phi] \subseteq K_\phi$. If $\ell \mid \left[ \End(E) \cap K_\phi : \Z[\phi] \right]$, then $\phi$ is not a distortion map for any $P \in E[\ell]$. Otherwise,
\begin{enumerate}
    \item $\lp \frac{D_\phi}{\ell} \rp = 1$ if and only if $\phi$ is a distortion map for all but two cyclic subgroups of $E[\ell]$.
    \item $\lp \frac{D_\phi}{\ell} \rp = -1$ if and only if $\phi$ is a distortion map for $E[\ell]$.
    \item $\lp \frac{D_\phi}{\ell} \rp = 0$ if and only if $\phi$ is a distortion map for all but one cyclic subgroup of $E[\ell]$.
\end{enumerate}
\end{proposition}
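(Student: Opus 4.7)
The plan is to reduce the problem to linear algebra on the two-dimensional $\F_\ell$-vector space $E[\ell]$. Since $\phi \notin \Z$, the minimal polynomial of $\phi$ over $\Q$ is the quadratic $f(X) = X^2 - \Tr(\phi)X + \deg(\phi)$, which is also the characteristic polynomial of $\phi$ acting on the Tate module $T_\ell(E)$; reducing modulo $\ell$, the characteristic polynomial of $\phi|_{E[\ell]}$ is $\bar f := f \bmod \ell$, whose discriminant is the class of $D_\phi$ in $\F_\ell$. A cyclic subgroup $\langle P \rangle \subseteq E[\ell]$ fails to be distorted by $\phi$ exactly when $P$ is an eigenvector of $\phi|_{E[\ell]}$, so the whole proof reduces to counting one-dimensional eigenspaces of $\phi|_{E[\ell]}$ in each case.

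Next I dispose of the hypothesis $\ell \mid [R : \Z[\phi]]$, where I set $R = \End(E) \cap K_\phi$. Such divisibility produces an element $\psi = (a + b\phi)/\ell \in R \setminus \Z[\phi]$ with integers $a, b$ and $\ell \nmid b$. Evaluating the identity $a + b\phi = \ell\psi$ on any $P \in E[\ell]$ gives $(a + b\phi)(P) = O$, so $\phi$ acts on $E[\ell]$ as the scalar $-ab^{-1} \in \F_\ell$; every line is an eigenspace, and $\phi$ is a distortion map for no cyclic subgroup of $E[\ell]$.

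Under the complementary hypothesis $\ell \nmid [R : \Z[\phi]]$, I split according to the Legendre symbol. If $\left(\frac{D_\phi}{\ell}\right) = -1$, then $\bar f$ is irreducible over $\F_\ell$ and $\phi|_{E[\ell]}$ has no eigenvectors, giving (2). If $\left(\frac{D_\phi}{\ell}\right) = +1$, then $\bar f$ is a product of two distinct linear factors, so the minimal polynomial of $\phi|_{E[\ell]}$ has distinct roots, and $\phi|_{E[\ell]}$ is diagonalisable with exactly two one-dimensional eigenspaces, the only cyclic subgroups where $\phi$ is not a distortion map, giving (1). The delicate case is $\left(\frac{D_\phi}{\ell}\right) = 0$, where $\bar f = (X - \lambda)^2$: one must rule out that $\phi|_{E[\ell]}$ is the scalar $\lambda \cdot \id$, as otherwise every cyclic subgroup would be an eigenspace. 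If it were scalar, then $\phi - \lambda$ would vanish on $E[\ell]$ and therefore factor as $\ell\psi$ for some $\psi \in \End(E)$; solving in $K_\phi$ places $\psi$ inside $R$, while writing $\psi = c + d\phi$ with $c, d \in \Z$ would force $\ell d = 1$, which is impossible; hence $\psi \in R \setminus \Z[\phi]$, contradicting the index hypothesis. Therefore $\phi|_{E[\ell]}$ is a nontrivial Jordan block with a unique one-dimensional eigenspace, giving (3). The biconditionals then follow since the three cases of the Legendre symbol partition the possibilities and correspond to mutually exclusive numbers of eigenspaces.

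I expect the main obstacle to be the scalar-exclusion step in the ramified case, in particular the verification that the would-be endomorphism $(\phi - \lambda)/\ell$ lies in $R$ but not in $\Z[\phi]$; the index hypothesis is precisely the correct invariant to enforce this, and the remaining casework is routine linear algebra over $\F_\ell$ once the characteristic polynomial of $\phi|_{E[\ell]}$ has been identified.
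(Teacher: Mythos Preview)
Your argument follows the same overall strategy as the paper: reduce to counting eigenlines of $\phi|_{E[\ell]}$ via its characteristic polynomial $\bar\chi_\phi$ modulo $\ell$, dispose of the scalar case under $\ell\mid[R:\Z[\phi]]$, and in the double-root case exclude scalar action by factoring $\phi-\lambda$ through $[\ell]$ to produce an element of $R\setminus\Z[\phi]$. The one substantive difference is in how you relate the factorisation type of $\bar\chi_\phi$ to the Legendre symbol. The paper passes through the maximal order $\mathcal{O}_{K_\phi}$, splits off the subcase $\ell\mid f'=[\mathcal{O}_{K_\phi}:R]$ by hand, and then invokes Dedekind--Kummer to match the factorisation of $\bar\chi_\phi$ with the splitting of $\ell$ in $K_\phi$; you instead read the factorisation directly from $D_\phi\bmod\ell$, which is more elementary and avoids the conductor bookkeeping.

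One small gap: your claim that the discriminant of $\bar f$ in $\F_\ell$ determines the factorisation type via $\left(\frac{D_\phi}{\ell}\right)$ is only literally correct for odd $\ell$, since every element of $\F_2$ is a square. For $\ell=2$ the paper uses the Kronecker-symbol convention (value determined by $D_\phi\bmod 8$), and you need a brief separate verification---e.g.\ $\bar f$ is irreducible over $\F_2$ iff $\Tr(\phi)$ and $\deg(\phi)$ are both odd, in which case $D_\phi=\Tr(\phi)^2-4\deg(\phi)\equiv 5\pmod 8$, giving $\left(\frac{D_\phi}{2}\right)=-1$ as required; the other two cases are similar. With that check added, your proof is complete.
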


\begin{proof}
Let $\mathcal{O}_{K_\phi}$ be the ring of integers of $K_\phi$ and consider the inclusion of orders $\Z[\phi] \subseteq \End(E) \cap K_\phi \subseteq \mathcal{O}_{K_\phi}$. Let $f = \left[ \End(E) \cap K_\phi : \Z[\phi] \right]$ and $f' = \left[ \mathcal{O}_{K_\phi} : \End(E) \cap K_\phi \right]$. Then $ff' = \left[ \mathcal{O}_{K_\phi}:\Z[\phi] \right]$. We denote the fundamental discriminant of $\mathcal{O}_{K_\phi}$ by $D_{K_\phi}$, so $D_\phi = (ff')^2D_{K_\phi}$.\svs

Assume first that $\ell \mid f$ and let $\beta \in \End(E) \cap K_\phi$ be generator of this order, \textit{i.e.}, $\End(E) \cap K_\phi = \Z[\beta]$. Then, $\Z[\phi] = \Z[f\beta]$. Hence, after changing the sign of $\beta$ if necessary, we can write $\phi = a + f\beta$ for some $a \in \Z$. Since multiplication by $f$ induces the 0 map on $E[\ell]$, we have that $\phi \vert_{E[\ell]} = a\vert_{E[\ell]}$, so $\phi$ is not a distortion map for any $P \in E[\ell]$.\svs

 From now on, we assume that $\ell \nmid f$. The rest of the proof relies on the splitting properties of the characteristic polynomial of $\phi$, which we denote by $\chi_\phi$. We link these properties with the distorsion property on the one hand, and link the same properties with the Legendre symbol on the other hand, which will be enough to conclude.
 
 Observe that the characteristic polynomial of $\phi\vert_{E[\ell]}$ as a linear map on $E[\ell]$ is the reduction $\bar{\chi}_\phi$ of $\chi_\phi$ modulo $\ell$. Three possible situations may occur:
\begin{enumerate}[(i)]
    \item $\bar{\chi}_\phi(x) \in \F_\ell[x]$ is irreducible,
    \item $\bar{\chi}_\phi(x) = (x-\bar{a})(x-\bar{b})$ for two different $\bar{a},\bar{b} \in \F_\ell$,
    \item $\bar{\chi}_\phi(x) = (x-\bar{a})^2$ for some $\bar{a} \in \F_\ell$.
\end{enumerate}

In situation (i), since $\bar{\chi}_\phi$ doesn't have roots in $\F_\ell$, the map $\phi\vert_{E[\ell]}$ doesn't have eigenvectors, so $\phi$ is a distortion map for $E[\ell]$. In situation (ii), $\phi\vert_{E[\ell]}$ has two different eigenvalues, and hence two different one-dimensional eigenspaces. Thus $\phi$ will be a distortion map for a point $P$ if, and only if, $P$ doesn't lie in any of the eigenspaces. In situation (iii), $\phi\vert_{E[\ell]}$ has a unique eigenvalue. The corresponding eigenspace could be 1 or 2-dimensional. We claim that it is necessarily 1-dimensional. Assume otherwise. Then $\phi\vert_{E[\ell]} = a\vert_{E[\ell]}$ for some integer $a \in \Z \subseteq \End(E)$. This implies that $\ker(\ell) \subseteq \ker(\phi - a)$. Since multiplication by $\ell$ is a separable isogeny, as $\ell \neq p$, there exist some $\psi \in \End(E)$ with $\ell \psi = \phi - a$ (see \cite[Corollary III.4.11]{Sil09}). As we have $\Z[\phi] = \Z[\ell\psi]$, we get $\Z[\psi] \otimes_\Z \Q = \Z[\phi] \otimes_\Z \Q = K_\phi$, and hence $\phi \in \End(E) \cap K_\phi$. Moreover, $\ell =\left[ \Z[\psi]:\Z[\phi] \right] \mid f$, which contradicts our assumptions. This proves the claim. In situation (iii), $\phi$ will be a distortion map for all points $P $ not lying in the one-dimensional eigenspace of $\phi\vert_{E[\ell]}$.\svs

Consider now as a first subcase that $\ell \mid f'$, so $\ell \mid ff'$. Let $\alpha \in \mathcal{O}_{K_\phi}$ be a generator of the ring of integers, \textit{i.e.}, $\mathcal{O}_{K_\phi} = \Z[\alpha]$. Changing the sign of $\alpha$ if necessary, we can write $\phi = a + ff'\alpha$ for some $a \in \Z$. Let $\chi_\alpha(x) = x^2 + bx + c$, with $b,c \in \Z$, be the characteristic polynomial of $\alpha$. Then $\chi_\phi(x) = (x-a)^2 +bff'(x-a) + c(ff')^2$, and hence $\bar{\chi}_\phi(x) = (x-\bar{a})^2$, \textit{i.e.}, we are in situation (iii). Since $\ell \mid D_\phi = (ff')^2D_K$, this happens precisely when $\lp \frac{D_\phi}{\ell} \rp = 0$.\svs

Consider now that $\ell \nmid f'$, so $\ell \nmid ff'$. By the Dedekind–Kummer theorem (see \cite{Conrad}, Theorem 1), situation (i) will occur precisely when $\ell$ remains inert in $K_\phi$, (ii) will happen precisely when $\ell$ splits in $K_\phi$, and (iii), when $\ell$ ramifies in $K_\phi$. The first case takes place precisely when $\lp \frac{D_K}{\ell} \rp = -1$ and, since $D_\phi$ and $D_K$ differ by a non-zero square modulo $\ell$, this is equivalent to saying that $\lp \frac{D_\phi}{\ell} \rp = -1$. The second case occurs precisely when $\lp \frac{D_K}{\ell} \rp = 1$. Again, this is equivalent to saying that $\lp \frac{D_\phi}{\ell} \rp = 1$. Finally, the third case occurs precisely when $\ell \mid D_K$, which is equivalent to $\ell \mid D_\phi$ and $\lp \frac{D_\phi}{\ell} \rp = 0$.
\end{proof}

One can derive several interesting consequences from the previous proposition. The first one allows us to determine when a given isogeny will be a distortion map for $E[m]$ for some integer $m$. Before stating it, we first show the following fact:

\begin{lemma}
Let $m > 1$ be an integer coprime to $p$. An isogeny $\phi$ is a distortion map for $E[m]$ if and only if $\phi$ is a distortion map for $E[\ell]$ for all primes $\ell \mid m$. In particular, $\phi$ is a distortion map for $E[\ell]$ if and only if it is for $E[\ell^n]$, for all integers $n>1$.
\end{lemma}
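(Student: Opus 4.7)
The plan is to split the equivalence into the two implications, handle the forward direction by a trivial inclusion argument, and handle the reverse direction by a contrapositive style argument that extracts a point of prime order from any putative counterexample.

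For the forward direction, I note that for every prime $\ell \mid m$, one has $E[\ell] \subseteq E[m]$. If $\phi$ is a distortion map for $E[m]$, then by definition $\phi(P) \notin \<P\>$ for every nonzero $P \in E[m]$, and this holds in particular for every nonzero $P \in E[\ell]$, giving that $\phi$ is a distortion map for $E[\ell]$.

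For the reverse direction, I argue by contradiction. Suppose $\phi$ is a distortion map for $E[\ell]$ for every prime $\ell \mid m$, but not for $E[m]$. Then there exists some nonzero $P \in E[m]$ with $\phi(P) \in \<P\>$, so $\phi(P) = aP$ for some $a \in \Z$. Let $n$ be the exact order of $P$; since $P \in E[m]$, we have $n \mid m$, and since $P \neq O$, we have $n > 1$. Pick any prime $\ell$ dividing $n$ (hence $\ell \mid m$), and set $Q = (n/\ell)P$. A standard order computation shows $Q$ has order exactly $\ell$, so $Q \in E[\ell]\setminus\{O\}$. Using $\Z$-linearity of $\phi$,
$$
\phi(Q) = \phi\bigl((n/\ell)P\bigr) = (n/\ell)\phi(P) = a\,(n/\ell)P = aQ \in \<Q\>,
$$
contradicting the fact that $\phi$ is a distortion map for $E[\ell]$.

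The ``in particular'' statement is then the special case $m = \ell^n$, since the only prime dividing $\ell^n$ is $\ell$ itself. There is no real obstacle in this proof; the only point requiring a little care is to choose $P$ of exact (not merely dividing) order $n$ and then extract the subpoint of order exactly $\ell$, so that $Q$ genuinely lies in $E[\ell]\setminus\{O\}$ and the contradiction with the hypothesis on $E[\ell]$ is valid.
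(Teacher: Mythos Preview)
Your proof is correct and follows essentially the same approach as the paper: the forward direction by the inclusion $E[\ell]\subseteq E[m]$, and the reverse by taking a nonzero $P\in E[m]$ with $\phi(P)=aP$, passing to $Q=(\ord(P)/\ell)P$ for a prime $\ell\mid\ord(P)$, and observing $\phi(Q)=aQ$. The only cosmetic difference is that you phrase the second half as a contradiction rather than a contrapositive.
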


\begin{proof}
The direct implication is trivial as $E[\ell] \subseteq E[m]$ for all $\ell \mid m$. We prove the inverse implication by contrapositive. Assume $\phi$ is not a distortion map for $E[m]$ and let $P \in E[m]$ be a non-trivial point with $\phi(P) = aP$ for some $a \in \Z$. Let $\ell$ be a prime with $\ell \mid \ord(P)$. Then $Q := \frac{\ord(P)}{\ell} P$ is a point of order $\ell$. Moreover, 
$$
\phi(Q) = \phi \lp \frac{\ord(P)}{\ell} P \rp = \frac{\ord(P)}{\ell} a P = aQ,
$$
so $\phi$ is not a distortion map for $E[\ell]$.
\end{proof}

\begin{corollary}\label{Corollary distortion E[m]}
Let $m$ be an integer, $\phi \in \End(E) \setminus \Z$ and $D_\phi = \textup{disc}(\Z[\phi])$. Then, $\phi$ is a distortion map for $E[m]$ if and only if $\lp \frac{D_\phi}{\ell} \rp = -1$ for all $\ell \mid m$.
\end{corollary}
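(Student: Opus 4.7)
The plan is to combine the lemma immediately above with Proposition \ref{Proposition Legendre symbol}, and to observe that when the Legendre symbol equals $-1$, the integrality condition appearing in the proposition is automatically satisfied.

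First, I would invoke the preceding lemma to reduce the problem to primes: $\phi$ is a distortion map for $E[m]$ if and only if $\phi$ is a distortion map for $E[\ell]$ for every prime $\ell \mid m$ (each such $\ell$ being automatically distinct from $p$ since $m$ is coprime to $p$). The claim thus reduces to showing, for a single such prime $\ell$, the equivalence between $\phi$ being a distortion map for $E[\ell]$ and $\lp \frac{D_\phi}{\ell} \rp = -1$.

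Next, I would apply Proposition \ref{Proposition Legendre symbol} to each such $\ell$. Setting $f = \left[ \End(E) \cap K_\phi : \Z[\phi] \right]$, that proposition gives the equivalence: $\phi$ is a distortion map for $E[\ell]$ if and only if $\ell \nmid f$ and $\lp \frac{D_\phi}{\ell} \rp = -1$. It remains to show that the first of these two conditions is redundant.

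To this end, I would reuse the factorization $D_\phi = (ff')^2 D_{K_\phi}$ from the proof of the proposition. If $\ell \mid f$, then $\ell \mid D_\phi$, and by definition of the Legendre symbol — both for odd $\ell$ and for $\ell = 2$, where the symbol is set to $0$ precisely when $2 \mid D$ — this forces $\lp \frac{D_\phi}{\ell} \rp = 0 \neq -1$. Contrapositively, $\lp \frac{D_\phi}{\ell} \rp = -1$ already implies $\ell \nmid f$, so the proposition's integrality hypothesis is automatic in this case, and the two equivalences stitch together to give the corollary. I do not expect any genuine obstacle; the corollary is essentially a repackaging of the lemma and the proposition, and the only subtle point is observing that the Legendre symbol being $-1$ already forces $\ell \nmid f$.
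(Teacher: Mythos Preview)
Your proposal is correct and follows exactly the route the paper intends: the corollary is stated without proof, as an immediate consequence of the preceding lemma and Proposition~\ref{Proposition Legendre symbol}. Your added observation that $\lp \frac{D_\phi}{\ell} \rp = -1$ already forces $\ell \nmid f$ (via $D_\phi = (ff')^2 D_{K_\phi}$) is precisely the small check needed to make the deduction airtight, and is implicit in the paper's presentation.
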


Another consequence of Proposition \ref{Proposition Legendre symbol} is that there is no universal distortion map:

\begin{corollary}
There is no $\phi \in \End(E)$ that is a distortion map for every point of $E$ of order coprime to $p$. 
\end{corollary}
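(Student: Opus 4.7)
The plan is to leverage Corollary \ref{Corollary distortion E[m]} together with the elementary number-theoretic observation that the Legendre symbol $\left(\frac{D}{\ell}\right)$ cannot equal $-1$ for all primes $\ell$ when $D$ is a fixed integer that is not a perfect square. Since a universal distortion map would, in particular, need to be a distortion map for $E[\ell]$ for every prime $\ell \neq p$, producing a single prime $\ell$ where the Legendre condition fails will suffice.

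First I would dispose of the trivial case $\phi \in \Z$ separately: if $\phi = n$ acts as multiplication by an integer, then $\phi(P) = nP \in \langle P \rangle$ for every $P$, so such a $\phi$ is not a distortion map for any point at all. Hence we may assume $\phi \in \End(E) \setminus \Z$, so that Proposition \ref{Proposition Legendre symbol} and its corollary apply and $D_\phi$ is well-defined as the discriminant of the order $\Z[\phi]$ in the imaginary quadratic field $K_\phi$.

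Next, I would observe that $D_\phi$ is a strictly negative integer: writing $D_\phi = (ff')^2 D_{K_{\phi}}$ with $D_{K_\phi} < 0$ the fundamental discriminant of the imaginary quadratic field $K_\phi$, we see that $D_\phi < 0$, and in particular $D_\phi$ is not a perfect square. By standard quadratic reciprocity, or equivalently by the Chebotarev density theorem applied to $\Q(\sqrt{D_\phi})/\Q$, the set of primes $\ell$ with $\left(\frac{D_\phi}{\ell}\right) = 1$ has natural density $1/2$ and is therefore infinite. Pick any such prime $\ell \neq p$.

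By Corollary \ref{Corollary distortion E[m]}, the failure $\left(\frac{D_\phi}{\ell}\right) \neq -1$ means $\phi$ is not a distortion map for $E[\ell]$, so there exists $P \in E[\ell] \setminus \{O\}$ with $\phi(P) \in \langle P \rangle$. Since $\ell \neq p$, the order of $P$ is coprime to $p$, and this $P$ witnesses the failure of $\phi$ to be a universal distortion map. The only real subtlety is ensuring the existence of the prime $\ell$, i.e.\ the claim that $\left(\frac{D_\phi}{\cdot}\right)$ is not identically $-1$ on primes; this follows either from quadratic reciprocity (together with Dirichlet's theorem on primes in arithmetic progressions to avoid the finite set of ramified primes and $p$) or, more conceptually, from the fact that non-trivial Dirichlet characters do not take a single non-zero value on all primes.
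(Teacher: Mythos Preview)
Your proof is correct and follows essentially the same approach as the paper's: both reduce to the Legendre-symbol criterion of Proposition~\ref{Proposition Legendre symbol}/Corollary~\ref{Corollary distortion E[m]} and then produce a prime $\ell \neq p$ with $\left(\frac{D_\phi}{\ell}\right) \neq -1$. The only difference is cosmetic: the paper's primary argument simply takes a prime $\ell$ dividing $D_\phi$ (so the symbol is $0$), which is more elementary than invoking Chebotarev, and then records your density-$1/2$ observation about primes with symbol $+1$ as a supplementary remark.
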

\begin{proof}
Fix $\phi \in \End(E) \setminus \Z$. For a given $\ell \neq p$, Proposition \ref{Proposition Legendre symbol} tells us that $\phi$ will be a distortion map for $E[\ell]$ if and only if $\lp \frac{D_\phi}{\ell} \rp = -1$. Since $D_\phi < -1$, there is at least one prime $\ell$ with $\lp \frac{D_\phi}{\ell} \rp = 0$, which is enough to conclude. We may add furthermore that by the Chebotarev density theorem, $\lp \frac{D_\phi}{\ell} \rp = 1$ for $50\%$ of the primes $\ell$ (when ordered naturally).
\end{proof}

\begin{comment}
This idea is exploited in the following sections. We can regard any integer $a \in \Z$ as the isogeny "multiplication by $a$". Then, $a \vert_{E[\ell]}$ acts as multiplication by $\bar{a} \in \Z/\ell\Z$, so it will never be a distortion map. 
We will make use of the following lemma in the next sections.

\begin{lemma}
Let $a,b \in \Z$ be integers with $b$ coprime to $\ell$. Then, $\phi \in \End(E)$ is a distortion map for a point $P$ if, and only if, $a + b\phi$ is.
\end{lemma}

\begin{proof}
Observe that $b\vert_{E[\ell]}$ acts on $E[\ell]$ as multiplication by $\bar{b} \in \Z/\ell\Z \setminus \{0\}$. Hence, $\bar{a}P + \bar{b} \phi(P) \in \<P\>$ if, and only if, $\phi(P) \in \<P\>$.
\end{proof}
\end{comment}

\subsection{The trace map}

We saw in the previous section how to determine whether a given isogeny is a distortion map. For any elliptic curve $E \, / \, \F_q$, one can always consider the $q$-th power Frobenius endomorphism $\pi_q$. Another isogeny that is sometimes used as a distortion map is the trace map, which we now define. In this section, we fix a prime $\ell \neq p$ with $\ell \mid \# E(\F_q)$ and denote the embedding degree $k(q,\ell)$ by $k$.

 \begin{definition}
 Let $k\geqslant 1$ be an integer. For a point $P \in E(\F_{q^k})$, one defines the \textit{trace of} $P$ (relative to $k$) as
$$
\Tr_k(P) = \sum_{i = 0}^{k-1} \pi_q^i(P). 
$$
 \end{definition}

The trace map $\Tr_k$ is an isogeny. We shall see that, when $k > 1$, the trace map allows us to construct a distortion map for certain points.

\begin{lemma}
Assume $k > 1$. Then $\pi_q \mid_{E[\ell]}$ has two different eigenspaces, with respective associated eigenvalues equal to $1$ and $q$.
\end{lemma}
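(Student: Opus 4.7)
The plan is to identify the characteristic polynomial of $\pi_q$, reduce it modulo $\ell$, and use the hypothesis $k > 1$ to ensure the two roots are distinct, so that $\pi_q|_{E[\ell]}$ is diagonalizable with two one-dimensional eigenspaces.

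First, I would recall that the characteristic polynomial of $\pi_q$ on $E$ is $\chi_{\pi_q}(x) = x^2 - tx + q$, where $t = q + 1 - \#E(\F_q)$ is the trace of Frobenius. The characteristic polynomial of the induced $\F_\ell$-linear map $\pi_q|_{E[\ell]}$ on the two-dimensional $\F_\ell$-vector space $E[\ell]$ is then $\bar{\chi}_{\pi_q}(x) \in \F_\ell[x]$, obtained by reducing modulo $\ell$.

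Next, I would use the hypothesis $\ell \mid \#E(\F_q)$ to note that $t \equiv q + 1 \pmod{\ell}$, so
\[
\bar{\chi}_{\pi_q}(x) = x^2 - (q+1)x + q = (x - 1)(x - q) \in \F_\ell[x].
\]
This immediately identifies $1$ and $q$ (mod $\ell$) as the only candidate eigenvalues. The root $x = 1$ corresponds to the existence of an $\F_q$-rational point of order $\ell$, which exists because $\ell \mid \#E(\F_q)$.

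Now the key step: the assumption $k > 1$ precisely means that $q \not\equiv 1 \pmod{\ell}$, because $k$ is by definition the order of $q$ in $(\Z/\ell\Z)^\times$. Hence the two roots $1$ and $q$ of $\bar{\chi}_{\pi_q}$ in $\F_\ell$ are distinct. Since a linear endomorphism of a two-dimensional vector space whose characteristic polynomial splits with two distinct roots is diagonalizable, $\pi_q|_{E[\ell]}$ has two distinct one-dimensional eigenspaces, with eigenvalues $1$ and $q$ respectively. There is no real obstacle here; the only subtlety is remembering that the hypothesis $k > 1$ is exactly what prevents the two eigenvalues from collapsing into a repeated root.
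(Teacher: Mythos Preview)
Your proof is correct and follows essentially the same route as the paper: compute the characteristic polynomial of $\pi_q$, reduce modulo $\ell$, use $\ell \mid \#E(\F_q)$ to factor it as $(x-1)(x-q)$, and invoke $k>1$ to ensure the roots are distinct so that $\pi_q\vert_{E[\ell]}$ has two one-dimensional eigenspaces.
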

\begin{proof}
    As $\ell$ is a prime dividing $\# E(\F_q)$ by assumption, $E$ has some $\ell$-torsion point defined over $\F_q$, and as such a point is fixed by $\pi_q$, the map $\pi_q \vert_{E[\ell]}$ must have an eigenvector associated with the eigenvalue $1$. Let $t = q + 1 - \# E(\F_q)$ denote the trace of $\pi_q$. Then, the characteristic polynomial of $\pi_q \vert_{E[\ell]}$ is the reduction modulo $\ell$ of the characteristic polynomial of $\pi_q$, and hence given by $X^2 - \bar{t}X + \bar{q}$. As $t \equiv q+1 \mod{\ell}$, this polynomial factors as $(X - \bar{1})(X - \bar{q})$. Observe that $\bar{1}$ and $\bar{q}$ are different if and only if $q \not\equiv 1 \mod{\ell}$. This is equivalent to saying that $k > 1$ by definition of the embedding degree.
\end{proof}

The eigenspace associated with the eigenvalue $1$ is $E(\F_q)[\ell]$. The points $P \in E(\F_q)[\ell]$ satisfy that $\Tr_k(P) = kP$, and hence $\Tr_k$ is not a distortion map for these points. Boneh showed that the eigenspace associated with the eigenvalue $q$ is precisely the set of points $P \in E[\ell]$ of trace $0$ (see \cite[Lemma 3]{BLS04} or \cite[Lemma IX.16]{Gal09}). Hence, the trace map is not a distortion map for this eigenspace either. The following lemma shows that the trace map is a distortion map for all other points:

\begin{lemma}
Assume $k > 1$. Let $P \in E[\ell]$ be a point not lying in one of the eigenspaces of $\pi_q \mid_{E[\ell]}$. Then, $\Tr_k$ is a distortion map for $P$.
\end{lemma}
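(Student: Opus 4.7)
The plan is to exploit the eigenspace decomposition from the preceding lemma: since $\pi_q|_{E[\ell]}$ has two distinct eigenvalues $1$ and $q$, their eigenspaces form a direct sum decomposition of $E[\ell]$, and we can write $P = P_1 + P_q$ with $P_1$ in the eigenspace for $1$ and $P_q$ in the eigenspace for $q$. By the hypothesis that $P$ does not lie in either eigenspace, both $P_1$ and $P_q$ are non-zero, so $\{P_1, P_q\}$ is a basis of $E[\ell]$ as an $\F_\ell$-vector space.

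Next I would compute $\Tr_k(P)$ in this basis. Using linearity and $\pi_q^i(P_1) = P_1$, $\pi_q^i(P_q) = q^i P_q$, one gets
$$
\Tr_k(P) = \sum_{i=0}^{k-1} \pi_q^i(P_1 + P_q) = k\, P_1 + \Bigl(\sum_{i=0}^{k-1} q^i \Bigr) P_q.
$$
The key arithmetic observation is that $\sum_{i=0}^{k-1} q^i \equiv 0 \pmod{\ell}$: indeed, by definition of the embedding degree, $\ell \mid q^k - 1$ but $\ell \nmid q - 1$ (because $k > 1$), and $q^k - 1 = (q-1)\sum_{i=0}^{k-1} q^i$, so $\ell$ must divide the second factor. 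Hence $\Tr_k(P) = k\, P_1$ in $E[\ell]$.

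It remains to verify that $k\, P_1 \notin \langle P \rangle$. Since $\{P_1, P_q\}$ is an $\F_\ell$-basis of $E[\ell]$, any element of $\langle P \rangle = \langle P_1 + P_q \rangle$ has the form $m P_1 + m P_q$ for some $m \in \Z/\ell\Z$, and equality $k\, P_1 = m P_1 + m P_q$ forces both $m \equiv k$ and $m \equiv 0 \pmod{\ell}$, hence $k \equiv 0 \pmod{\ell}$. But $k$ is the order of $q$ in $(\Z/\ell\Z)^\times$, so $k \mid \ell - 1$ and in particular $1 \leq k \leq \ell - 1$, excluding $\ell \mid k$. Therefore $\Tr_k(P) \notin \langle P \rangle$, completing the proof.

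No step really poses a serious obstacle; the only mild subtlety is the congruence $\sum_{i=0}^{k-1} q^i \equiv 0 \pmod{\ell}$, which needs the minimality of $k$ (through $\ell \nmid q-1$) and not just $\ell \mid q^k - 1$.
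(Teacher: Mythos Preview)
Your proof is correct and takes essentially the same approach as the paper: both compute the action of $\Tr_k$ on the two eigenspaces of $\pi_q\vert_{E[\ell]}$, obtaining eigenvalues $k$ and $0$, and then use $k\mid \ell-1$ to rule out $\ell\mid k$. The only cosmetic difference is that the paper phrases the conclusion by saying $\Tr_k\vert_{E[\ell]}$ has the same eigenspaces as $\pi_q\vert_{E[\ell]}$ (with distinct eigenvalues $k$ and $0$), whereas you write out $\Tr_k(P)=kP_1$ explicitly and check by hand that this is not a multiple of $P=P_1+P_q$.
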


\begin{proof}
Observe that, since $k$ is the order of $q$ in $(\Z/\ell\Z)^\times$, we have $k \mid \ell-1$. In particular, $\ell \nmid k$. Thus, the action of the trace map on $E[\ell]$ has two different eigenvalues: $k$ and $0$. As explained before, the eigenspaces of $\Tr_k \vert_{E[\ell]}$ of the eigenvalues $k$ and $0$ coincide, respectively, with the eigenspaces of $\pi_q \vert_{E[\ell]}$ of eigenvalues 1 and $q$. Hence, if $P$ does not lie in an eigenspace of $\pi_q \vert_{E[\ell]}$, it does not lie in an eigenspace of $\Tr_k \vert_{E[\ell]}$ either. The result follows.
\end{proof}

\subsection{Modified Weil pairing}

The interest of distortion maps relies on the fact that they allow us to modify the usual Weil pairing on $E[m]$ to obtain a new pairing that is not skew-symmetric, but preserves all the other interesting properties of $e_m$. To describe it, let us fix an integer $m \in \Z_{>0}$ coprime to $p$ and a distortion map $\phi \in \End(E)$ for $E[m]$.

\begin{definition}
The \textit{modified Weil pairing} with respect to $\phi$ on $E[m]$ is the pairing
$$
\begin{matrix}
e_{m,\phi}: & E[m] \times E[m] & \longrightarrow & \mu_m \\
& (P,Q) & \longmapsto & e_m(P, \phi(Q))
\end{matrix}
$$
where $e_m$ is the Weil pairing on the $m$-th torsion points.
\end{definition}

\begin{proposition}\label{Properties modified Weil pairing}
The modified Weil pairing satisfies the following properties:
\begin{enumerate}
    \item Bilinearity: for all $P_1, P_2, Q, P, Q_1, Q_2 \in E[m]$,
    $$e_{m, \phi}(P_1 + P_2,Q) = e_{m,\phi}(P_1,Q)e_{m,\phi}(P_2,Q),$$
    $$e_{m, \phi}(P,Q_1+Q_2) = e_{m,\phi}(P,Q_1)e_{m,\phi}(P,Q_2).$$
    
    \item Distortion: let $P\in E[m]$ be a point of order $m$. Then,
    $$\ord_{\mu_m} \big(e_{m,\phi}(P,P) \big) = m.$$ 
    
    \item Nondegeneracy: Fix $Q \in E[m]$. 
    \begin{center}
        If $e_{m,\phi}(P,Q) = 1$ for all $P \in E[m]$, then $Q = O$.
        
        If $e_{m,\phi}(Q,P) = 1$ for all $P \in E[m]$, then $Q = O$.
    \end{center}
    
    \item Compatibility: if $m = d d'$, then
    \begin{center}
        $e_{m,\phi}(P,Q) = e_{d,\phi}(d'P,Q)$ for all $P \in E[m]$ and all $Q \in E[d]$.
    \end{center}
    \begin{center}
        $e_{m,\phi}(P,Q) = e_{d,\phi}(P,d'Q)$ for all $P \in E[d]$ and all $Q \in E[m]$.
    \end{center}
    \item Galois invariance: assume $\phi$ is defined over $\F_{q^k}$. For all $P, Q \in E[m]$,
    \begin{center}
        $e_{m,\phi}(P^\sigma,Q^\sigma) = e_{m,\phi}(P,Q)^\sigma$ for all $\sigma \in \Gal(\overline{\F}_q / \F_{q^k})$.
        \end{center}
\end{enumerate}
\end{proposition}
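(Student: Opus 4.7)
The plan is to derive each of the five properties from the corresponding property of the usual Weil pairing $e_m$, using that $\phi$ is a group endomorphism and, crucially, that as a distortion map for $E[m]$ it restricts to a bijection $\phi|_{E[m]}:E[m]\to E[m]$. Indeed, if $\phi(R)=O$ for some nonzero $R\in E[m]$, then $O\in \<R\>$ would contradict the distortion hypothesis, so $\phi|_{E[m]}$ is injective, hence bijective on the finite set $E[m]$.

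Items (1), (3), (4) and (5) are then essentially routine. Bilinearity in the first argument is immediate from bilinearity of $e_m$, while bilinearity in the second additionally uses that $\phi$ is additive. For nondegeneracy, if $e_{m,\phi}(P,Q)=e_m(P,\phi(Q))=1$ for all $P$, then $\phi(Q)=O$ by nondegeneracy of $e_m$, whence $Q=O$ by injectivity of $\phi|_{E[m]}$; conversely, if $e_{m,\phi}(Q,P)=e_m(Q,\phi(P))=1$ for all $P$, then because $\phi(P)$ runs over all of $E[m]$ by surjectivity, nondegeneracy of $e_m$ again forces $Q=O$. The first compatibility identity is a direct substitution into the standard Weil-pairing compatibility $e_m(P,S)=e_d(d'P,S)$ for $P\in E[m]$ and $S\in E[d]$, applied with $S=\phi(Q)\in E[d]$; the second identity follows by combining the same compatibility with the skew-symmetry of $e_m$. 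Galois invariance uses the Galois invariance of $e_m$ together with $\phi(Q^\sigma)=\phi(Q)^\sigma$, which holds whenever $\sigma\in\Gal(\overline{\F}_q/\F_{q^k})$ and $\phi$ is defined over $\F_{q^k}$.

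The interesting step is (2). Fix $P\in E[m]$ of order $m$, and extend $\{P\}$ to a basis $\{P,R\}$ of $E[m]\iso(\Z/m\Z)^2$. Writing $\phi(P)=aP+bR$ with $a,b\in\Z/m\Z$, the goal is to show $\gcd(b,m)=1$. For each prime $\ell\mid m$, the point $P':=(m/\ell)P$ has order $\ell$, and $\phi(P')=(m/\ell)(aP+bR)$ belongs to $\<P'\>$ if and only if $(m/\ell)bR=O$ in $E[m]$, equivalently $\ell\mid b$. The distortion hypothesis on $E[m]$ therefore forces $\ell\nmid b$ for every prime $\ell\mid m$, so $b\in(\Z/m\Z)^\times$. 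Using the alternating property $e_m(P,P)=1$, we then compute
\[
e_{m,\phi}(P,P)=e_m(P,\phi(P))=e_m(P,P)^a\,e_m(P,R)^b=e_m(P,R)^b.
\]
Since $e_m(P,R)$ is a primitive $m$-th root of unity (a standard consequence of the nondegeneracy of $e_m$ on the basis $\{P,R\}$) and $b$ is a unit modulo $m$, the right-hand side has order exactly $m$, completing the proof of (2). The main obstacle is thus the bookkeeping at every prime divisor of $m$ required to upgrade the pointwise distortion hypothesis into a coprimality statement about $b$; all other items reduce transparently to properties of $e_m$.
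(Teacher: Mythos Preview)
Your proof is correct and follows essentially the same approach as the paper: properties (1), (3), (4), (5) are reduced to the corresponding properties of $e_m$ via the additivity of $\phi$ and the bijectivity of $\phi|_{E[m]}$, and property (2) is obtained by writing $\phi(P)=aP+bR$ in a basis and arguing that $b$ must be a unit modulo $m$. The only stylistic difference is that the paper handles (2) by a single contrapositive (if the order were $k<m$, then $\phi(kP)\in\langle P\rangle$), whereas you verify $\ell\nmid b$ prime by prime; the underlying idea is the same.
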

\begin{proof}
Properties (1) and (4) follow from the linearity of $\phi$ and the analogous properties for the Weil pairing (see \cite[Proposition III.8.1]{Sil09}). Property (5) follows from the fact that, if $\sigma \in \Gal(\overline{\F}_q / \F_{q^k})$, then $\phi(P^\sigma) = \phi(P)^\sigma$, and the corresponding property of $e_m$.\svs
    
To show property (3), observe that $\phi \vert_{E[m]}$ is an isomorphism. Indeed, since $E[m]$ is finite, it suffices to show injectivity. By definition of a distortion map, if $P \neq O$, then $\phi(P) \notin \<P\>$. In particular, $\ker \phi \vert_{E[m]} = \{O\}$. Property (3) then follows from the analogous property of the Weil pairing.\svs

Finally, to prove (2), let $P$ be a point of order $m$ and let $Q$ be a point such that $P, Q$ form a basis of the $\mathbb{Z}$-module $E[m]$. If $e_m(P, \phi(P))$ is of order $k$ for some $k<m$, then $\phi(P) = aP + \frac{bm}{k}Q$ for some $a, b \in \mathbb{Z}/m\mathbb{Z}$. Then $\phi(kP) \in \langle P\rangle$ and $kP \neq O$ which contradicts the distortion property of $\phi$. Therefore $e_m(P, \phi(P))$ is of order $m$.
\end{proof}

When $m$ is square-free, the following stronger version of the distortion property for the modified Weil pairing holds:

\begin{corollary}[Strong distortion]\label{Strong distortion property}
Assume $m$ is square-free, let $n\mid m$, and let $P \in E[m]$ be a point of order $n$. Then,
    $$\ord_{\mu_m} \big(e_{m,\phi}(P,P) \big) = n.$$
\end{corollary}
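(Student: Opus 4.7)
The plan is to sandwich the order of $e_{m,\phi}(P,P)$ between $n$ and $n$ by using bilinearity for the upper bound and the compatibility plus distortion properties of Proposition \ref{Properties modified Weil pairing} for the lower bound, exploiting the crucial fact that, since $m$ is square-free, $n$ and $d:=m/n$ are coprime.

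For the upper bound, since $P$ has order $n$, bilinearity gives
$$e_{m,\phi}(P,P)^{n} = e_{m,\phi}(nP,P) = e_{m,\phi}(O,P) = 1,$$
so $\ord_{\mu_m}(e_{m,\phi}(P,P))$ divides $n$.

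For the lower bound, I would apply the compatibility property (4) with the factorization $m = d \cdot n$ (so that the second argument must lie in $E[n]$, which $P$ does since it has order $n$):
$$e_{m,\phi}(P,P) = e_{n,\phi}(dP,P).$$
Because $m$ is square-free and $n\mid m$, we have $\gcd(n,d)=1$, so $dP$ still has exact order $n$ in $E[n]$, and moreover $d$ is invertible modulo $n$; let $d^{-1}$ denote a representative of its inverse. Then $P \equiv d^{-1}(dP) \pmod{n}$, so by bilinearity
$$e_{n,\phi}(dP, P) = e_{n,\phi}(dP, dP)^{d^{-1}}.$$
Now $\phi$ being a distortion map for $E[m]$ implies, since $E[n]\subseteq E[m]$, that $\phi$ is also a distortion map for $E[n]$; hence the standard distortion property (2) applied to $e_{n,\phi}$ and to the order-$n$ point $dP$ yields $\ord_{\mu_n}(e_{n,\phi}(dP,dP)) = n$. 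Since $\gcd(d^{-1},n)=1$, raising to the $d^{-1}$-th power preserves the order, so $e_{n,\phi}(dP,P)$ also has order $n$ in $\mu_n$, and therefore in $\mu_m \supseteq \mu_n$.

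Combining, $\ord_{\mu_m}(e_{m,\phi}(P,P)) = n$. The only potential sticking point is confirming that the square-freeness hypothesis is exactly what is needed to guarantee $\gcd(n,m/n)=1$; without it, the step $P \equiv d^{-1}(dP) \pmod n$ fails and $dP$ could have order strictly smaller than $n$, which is precisely why the strengthening does not extend beyond square-free $m$.
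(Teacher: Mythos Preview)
Your proof is correct and follows essentially the same approach as the paper: reduce $e_{m,\phi}(P,P)$ to $e_{n,\phi}$ via compatibility, then invoke the distortion property~(2) together with $\gcd(n,m/n)=1$. The only cosmetic difference is that the paper pulls the scalar $m/n$ out of the \emph{first} argument to get $e_{n,\phi}(P,P)^{m/n}$ and applies~(2) directly to $P$, whereas you pull it out of the second argument and apply~(2) to $(m/n)P$; your separate upper-bound step is also redundant once the exact order is established.
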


\begin{proof}
Write $m = n n'$. By Proposition \ref{Properties modified Weil pairing}.(4), we have that
$$
e_{m,\phi}(P,P) = e_{n, \phi}(n'P, P) = e_{n,\phi}(P,P)^{n'}.
$$
By Proposition \ref{Properties modified Weil pairing}.(2), $e_{n,\phi}(P,P)$ is a primitive $n$-th root of unity. Moreover, since $m$ is square-free, $n$ and $n'$ are coprime. The result follows.
\end{proof}

\begin{remark}
Corollary \ref{Strong distortion property} fails if $m$ is not square-free. Indeed, take $m = \ell^2 m'$ and $P \in E[m]$ a point of order $\ell$. It follows from the compatibility property that $e_{m,\phi}(P,P) = 1$, even when $\phi(P)\notin \langle P\rangle$.
\end{remark}

\subsection{Isogeny transfer}
One can use isogenies to transfer distortion maps from one elliptic curve to another. Consider two elliptic curves $E_1, E_2$ over $\F_q$, and an isogeny $\psi: E_1 \to E_2$ of degree $d = \deg(\psi)$. We denote by $\widehat{\psi}: E_2 \to E_1$ the dual isogeny of $\psi$.

\begin{proposition}\label{Prop transfer}
Let $P \in E_1(\overline{\F}_q)$ be a point of order $m$ coprime to $pd$, and assume $\phi \in \End(E_1)$ is a distortion map for $P$. Then $\psi \circ \phi \circ \widehat{\psi} \in \End(E_2)$ is a distortion map for $\psi(P) \in E_2(\overline{\F}_q)$.
\end{proposition}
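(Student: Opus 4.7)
The plan is to apply the identity $\widehat{\psi}\circ\psi=[d]$ directly to $\psi(P)$, then use the coprimality of $m$ and $d$ to pull any hypothetical linear dependence back to $E_1$, where it will contradict the distortion property of $\phi$ at $P$.

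First I would record a preliminary observation: $\psi(P)$ has order exactly $m$. Indeed, $\ker\psi$ has order $d$ and $\gcd(m,d)=1$, so no nontrivial point of order dividing $m$ can lie in $\ker\psi$; in particular $\psi(P)\neq O$, and if $m'\mid m$ satisfies $m'\psi(P)=\psi(m'P)=O$ then $m'P=O$, forcing $m'=m$. Hence $\<\psi(P)\>$ is a genuine cyclic group of order $m$ in $E_2(\overline{\F}_q)$. A direct computation using $\widehat{\psi}\circ\psi=[d]$ and the $\Z$-linearity of isogenies then gives
\[
(\psi\circ\phi\circ\widehat{\psi})\bigl(\psi(P)\bigr)=\psi\bigl(\phi(dP)\bigr)=d\,\psi(\phi(P)),
\]
so the question reduces to whether $d\,\psi(\phi(P))$ lies in $\<\psi(P)\>$.

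Suppose for contradiction that $d\,\psi(\phi(P))=a\,\psi(P)$ for some $a\in\Z$. Setting $R:=d\phi(P)-aP\in E_1(\overline{\F}_q)$, the equation says $\psi(R)=O$, so $R\in\ker\psi$ and the order of $R$ divides $d$. On the other hand, $\phi$ preserves the $m$-torsion ($m\phi(P)=\phi(mP)=O$), so $R\in E_1[m]$, and the order of $R$ also divides $m$. Since $\gcd(m,d)=1$, this forces $R=O$, i.e. $d\phi(P)=aP$. Finally, picking $d'\in\Z$ with $dd'\equiv 1\pmod m$, we get $\phi(P)=ad'P\in\<P\>$, contradicting that $\phi$ is a distortion map for $P$.

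The argument is essentially bookkeeping; the only delicate step is the order analysis when comparing $R\in\ker\psi$ against $R\in E_1[m]$. The hypothesis $\gcd(m,pd)=1$ enters the proof solely through $\gcd(m,d)=1$, and it is used twice there: once to conclude $R=O$, and once to invert $d$ modulo $m$. Coprimality of $m$ with $p$ is already built into the definition of a distortion map and plays no further role here.
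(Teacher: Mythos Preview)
Your proof is correct. The only quibble is the phrase ``$\ker\psi$ has order $d$'': this is literally true only when $\psi$ is separable, but what you actually use---that every $\overline{\F}_q$-point of $\ker\psi$ has order dividing $d$---follows from $\widehat{\psi}\circ\psi=[d]$ regardless, so the argument stands.

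Your route is genuinely different from the paper's. You work directly with the group law: compute $(\psi\phi\widehat{\psi})(\psi(P))=d\,\psi(\phi(P))$, assume this lies in $\<\psi(P)\>$, and pull the relation back through $\psi$ using the coprimality of $m$ and $d$ to contradict $\phi(P)\notin\<P\>$. The paper instead invokes the Weil pairing and its adjunction with dual isogenies (Silverman~III.8.2) to compute
\[
e_m^{E_2}\bigl(\psi(P),(\psi\phi\widehat{\psi})(\psi(P))\bigr)=e_m^{E_1}(P,\phi(P))^{d^2},
\]
which is a nontrivial $m$-th root of unity by Proposition~\ref{Properties modified Weil pairing}(2) and the fact that $\gcd(d,m)=1$; the alternating property then forces $(\psi\phi\widehat{\psi})(\psi(P))\notin\<\psi(P)\>$. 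Your argument is more elementary---it never touches the Weil pairing and needs no prior results from the paper---while the paper's version fits the surrounding narrative about modified pairings and yields the nontriviality of $e_{m,\psi\phi\widehat{\psi}}(\psi(P),\psi(P))$ as a direct by-product.
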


\begin{proof}
Let $e_m^1$ and $e_m^2$ denote the Weil pairings on $E_1[m]$ and $E_2[m]$, respectively. By the properties of the Weil pairing, it suffices to show that $e_m^2 \lp \psi(P), \psi \circ \phi \circ \widehat{\psi}(\psi(P)) \rp \neq 1$. Using \cite[Proposition III.8.2]{Sil09}, we have that 
$$
e_m^2 \lp \psi(P), (\psi \circ \phi \circ \widehat{\psi} \circ \psi)(P)) \rp = e_m^1 \lp (\widehat{\psi} \circ \psi) (P), (\phi \circ \widehat{\psi} \circ \psi) (P) \rp = e_m^1(P, \phi(P))^{d^2}.
$$

Since $d$ is coprime to $m$ and $e_m^1(P, \phi(P)) \neq 1$ by Proposition \ref{Properties modified Weil pairing}.(2), the result follows.
\end{proof}

\begin{corollary}\label{Cor transfer}
Let $m > 1$ be an integer coprime to $pd$ and assume $\phi \in \End(E_1)$ is a distortion map for $E_1[m]$. Then $\psi \circ \phi \circ \widehat{\psi} \in \End(E_2)$ is a distortion map for $E_2[m]$.
\end{corollary}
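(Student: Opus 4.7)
The plan is to reduce the corollary to Proposition \ref{Prop transfer} applied pointwise. Fix an arbitrary non-trivial point $Q \in E_2[m]$; I need to produce a preimage $P \in E_1[m]$ with $\psi(P) = Q$, because then I can invoke Proposition \ref{Prop transfer} for $P$ (once I check the hypotheses) and immediately conclude that $\psi \circ \phi \circ \widehat{\psi}$ is a distortion map for $\psi(P) = Q$.

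The first step is to show that $\psi$ restricts to an isomorphism $\psi|_{E_1[m]} \colon E_1[m] \xrightarrow{\sim} E_2[m]$. Since $\psi$ has degree $d$, its kernel has order $d$ and in particular $\ker \psi \subseteq E_1[d]$. Because $\gcd(m,d) = 1$, we have $E_1[m] \cap E_1[d] = \{O\}$, so $\psi|_{E_1[m]}$ is injective. As $m \cdot \psi(P) = \psi(mP) = O$ for any $P \in E_1[m]$, the image lies in $E_2[m]$, and since both $E_1[m]$ and $E_2[m]$ have the same (finite) cardinality $m^2$ (using that $m$ is coprime to $p$), the restriction is bijective.

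Now, given $Q \in E_2[m] \setminus \{O\}$, let $P$ be its unique preimage in $E_1[m]$. Then $P$ is a non-trivial point of order dividing $m$, hence of order coprime to $pd$. Since $\phi$ is a distortion map for all of $E_1[m] \setminus \{O\}$, it is in particular a distortion map for $P$. All hypotheses of Proposition \ref{Prop transfer} are satisfied, so $\psi \circ \phi \circ \widehat{\psi}$ is a distortion map for $\psi(P) = Q$. As $Q$ was an arbitrary non-trivial element of $E_2[m]$, this yields the corollary.

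There is no real obstacle here; the only point that requires care, and which I would write out explicitly, is the bijectivity of $\psi|_{E_1[m]}$, which crucially uses $\gcd(m,d) = 1$. Once this is established, the corollary is a direct pointwise application of Proposition \ref{Prop transfer}.
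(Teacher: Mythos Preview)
Your proof is correct and follows the same route as the paper: establish that $\psi$ restricts to a bijection $E_1[m] \to E_2[m]$ (using $\gcd(m,d)=1$), then apply Proposition~\ref{Prop transfer} pointwise. The paper's proof is simply a one-line compression of exactly this argument; your version spells out the details, including the minor but correct observation that the $m$ in Proposition~\ref{Prop transfer} is to be taken as $\mathrm{ord}(P)\mid m$ when $Q$ does not have full order $m$.
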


\begin{proof}
Since $d$ is coprime to $m$, $\psi$ induces an isomorphism from $E_1[m]$ to $E_2[m]$. The result follows from the previous proposition.
\end{proof}

\begin{remark}
One can apply the previous results to the dual isogeny to show that these are necessary and sufficient conditions.
\end{remark}

The endomorphism rings $\End(E_1)$ and $\End(E_2)$ are orders of the same endomorphism algebra. Moreover, they satisfy  $\psi \mathrm{End}(E_1) \widehat\psi \subseteq \mathrm{End}(E_2)$ and $\widehat\psi \mathrm{End}(E_2) \psi \subseteq \mathrm{End}(E_1)$ (see \cite[pg. 7]{Koh96}). Hence, if we denote by $\mathcal{D}_{E,P}$ and $\mathcal{D}_{E,m}$ the set of distortion maps for $P \in E(\bar{\F}_q)$ and $E[m]$ respectively, we have the following result:

\begin{corollary}
Let $m > 1$ be an integer coprime to $pd$, and $P \in E_1(\bar{\F}_q)$ and $Q \in E_2(\bar{\F}_q)$ points of order $m$. Then,
\begin{itemize}
\item $\psi \mathcal{D}_{E_1,P} \widehat{\psi} \subseteq \mathcal{D}_{E_2,\psi(P)}$ and $\widehat{\psi} \mathcal{D}_{E_2,Q} \psi \subseteq \mathcal{D}_{E_1,\widehat{\psi}(Q)}$.

\item $\psi \mathcal{D}_{E_1,m} \widehat{\psi} \subseteq \mathcal{D}_{E_2,m}$ and $\widehat{\psi} \mathcal{D}_{E_2,m} \psi \subseteq \mathcal{D}_{E_1,m}$.
\end{itemize}
\end{corollary}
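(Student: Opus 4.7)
The statement collects four set-theoretic inclusions, each of which is an immediate packaging of either Proposition \ref{Prop transfer} or Corollary \ref{Cor transfer}, combined with the module-theoretic containment $\psi \End(E_1) \widehat{\psi} \subseteq \End(E_2)$ (and its symmetric analog $\widehat{\psi} \End(E_2) \psi \subseteq \End(E_1)$) cited from \cite[pg.~7]{Koh96} just above the statement. The plan is to verify each of the four inclusions in turn; there is no genuine obstacle, just bookkeeping to make sure every composition lands in the correct endomorphism ring and that the conclusion of the relevant transfer result matches the definition of the target set.

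For the first inclusion $\psi \mathcal{D}_{E_1,P} \widehat{\psi} \subseteq \mathcal{D}_{E_2,\psi(P)}$, take an arbitrary $\phi \in \mathcal{D}_{E_1,P}$. By definition $\phi \in \End(E_1)$ is a distortion map for $P$, and the containment $\psi \End(E_1) \widehat{\psi} \subseteq \End(E_2)$ ensures $\psi \phi \widehat{\psi} \in \End(E_2)$; Proposition \ref{Prop transfer} (applicable since $\gcd(m,pd) = 1$) then yields precisely that $\psi \phi \widehat{\psi}$ is a distortion map for $\psi(P)$, which is the required membership in $\mathcal{D}_{E_2,\psi(P)}$.

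For the symmetric inclusion $\widehat{\psi} \mathcal{D}_{E_2,Q} \psi \subseteq \mathcal{D}_{E_1,\widehat{\psi}(Q)}$, apply the same reasoning to the isogeny $\widehat{\psi}: E_2 \to E_1$, whose dual is $\widehat{\widehat{\psi}} = \psi$ and whose degree is $\deg(\widehat{\psi}) = d$, so the coprimality hypothesis is preserved. Given $\phi \in \mathcal{D}_{E_2,Q}$, the containment $\widehat{\psi} \End(E_2) \psi \subseteq \End(E_1)$ gives $\widehat{\psi} \phi \psi \in \End(E_1)$, and Proposition \ref{Prop transfer} applied to $\widehat{\psi}$ states that this endomorphism is a distortion map for $\widehat{\psi}(Q)$, as desired.

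The final two inclusions $\psi \mathcal{D}_{E_1,m} \widehat{\psi} \subseteq \mathcal{D}_{E_2,m}$ and $\widehat{\psi} \mathcal{D}_{E_2,m} \psi \subseteq \mathcal{D}_{E_1,m}$ are established by repeating the two arguments above verbatim, invoking Corollary \ref{Cor transfer} instead of Proposition \ref{Prop transfer}; the same module containments guarantee well-definedness, and the hypothesis $\gcd(m,pd) = 1$ passes through unchanged when $\psi$ is replaced by $\widehat{\psi}$ since $\deg(\widehat{\psi}) = d$. This completes the verification of all four inclusions.
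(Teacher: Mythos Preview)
Your proof is correct and matches the paper's approach exactly: the paper does not give a separate proof for this corollary, merely prefacing it with ``Hence'' after recalling the containments $\psi \End(E_1)\widehat{\psi}\subseteq\End(E_2)$ and $\widehat{\psi}\End(E_2)\psi\subseteq\End(E_1)$, so you have simply written out the bookkeeping the paper leaves implicit.
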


\begin{example}\label{example}
By an implementation in SageMath\footnote{Code for all the examples: \url{https://github.com/noulasd/distortion}} \cite{Sagemath}, we will transfer via isogenies the distortion map from the third example in Table \ref{table1}. Let $\F_{101^6} = \F_{101}[X]/(X^6 + 2X^4 + 90X^3 + 20X^2 + 67X + 2)$ and $z$ the image of $X$ in $\F_{101^6}$. Let $\tau$ be a root of the polynomial $x^2-2$, which is not in $\mathbb{F}_{101}$. Consider the elliptic curve
\[E: \ y^2 = x^3 + \tau + 1 \, / \, \F_{101^2}.\]
Let $r$ be a root of the polynomial $x^2-(\tau+1)$ and $\omega$ a root of $x^3-r$ as specified in Table \ref{table1}. For instance, in $\F_{101^6} = \mathbb{F}_{101}(z)$, one can take:
\begin{align*}
    \tau &= 56z^5 + 25z^4 + 79z^3 + 20z^2 + 70z + 9, \\
    r & = 69z^5 + 29z^4 + 27z^3 + 3z^2 + 61z + 3, \\
    \omega & = 38z^5 + 90z^4 + 19z^3 + 94z^2 + 100z + 69.
\end{align*}
    
One has that $\#E(\mathbb{F}_{101^6}) = 2^2\cdot 3^4\cdot 7^2 \cdot 13^2 \cdot 17^2 \cdot 37^2$. We will pick a point $Q \in E[17]$ which, by Vélu's method \cite{Vel71} will yield an isogeny of degree $17$, then transfer the properly distorted points of $E[7]$. Note that, it is specified in \cite{JN03} that the map $\phi(x,y) = \left(\omega\frac{x^{p}}{r^{(2p-1)/3}},\frac{y^{p}}{r^{p-1}}\right)$ is a distortion map for the $6$ non-trivial points in $E[7](\mathbb{F}_{101^2})$. In fact, we computed that $\phi$ distorts all but $12$ non-trivial points in $E[7]$.\svs

Pick $Q = (32z^5 + 16z^4 + 91z^3 + 59z^2 + 49z, 90z^5 + 51z^4 + 44z^3 + 61z^2 + 62z + 41)$. This gives a 17-degree isogeny $\psi: E \rightarrow E^\prime$, where $E^\prime$ is given by the equation $y^2 = x^3 + (70z^5+4z^4+88z^3+z^2+12z+43)x + (6z^5+64z^4+77z^3+31z^2+58z+88)$. The new distortion map for $ E^\prime$ is 
\[\psi \circ \phi \circ \widehat\psi(x,y) = (g_1(x,y), g_2(x,y)),\]
where $g_1(x,y), g_2(x,y)$ are too large to be included in the text, but both are easily computed and have leading terms $(32z^5 + 32z^4 - 49z^3 - 50z^2 + 18z + 14)x^{29189}$ and $(-36z^5 + 20z^4 + 43z^3 + 16z^2 - 45z - 10)x^{43632}y^{101} $, respectively.\svs

We check that $\psi\circ \phi \circ \widehat \psi$ distorts all but $12$ non-trivial points of $E^\prime[7]$. For instance, let $P = (83z^5 + 30z^4 + 59z^3 + 26z^2 + 46z + 100, 36z^5 + 81z^4 + 56z^3 + 23z^2 + 32z + 56 ) \in E^\prime[7]$, then the modified Weil pairing of $E^\prime[7]$ with respect to $\psi\circ\phi\circ\widehat\psi$ maps $P$ to the $7$-th root of unity 
$52z^5 + 11z^4 + 12z^3 + 57z^2 + 81z + 72$. However, $P^\prime = (23z^4 + 33z^3 + 89z^2 + 66z + 46, 22z^5 + 30z^4 + 82z^3 + 4z^2 + 75z + 42) \in E^\prime[7] $ is sent to $1$.
\end{example}

\begin{remark}
In the previous example, we verified computationally that $\phi$ and $\psi \circ \phi \circ \widehat{\psi}$ distort all but 2 cyclic subgroups of $E[7]$ and $E'[7]$, respectively. This is consistent with the fact that $\phi \vert_{E[7]}$ has two distinct eigenvalues, which we computed to be $2\ \mathrm{mod}\,7$ and $5\ \mathrm{mod}\,7$. Indeed, $\phi^2(x,y) = \left(\zeta \cdot  x^{101^2}, y^{101^2}\right)$, where $\zeta = r^{-2(101^2-1)/3}$. The element $\zeta$ is a primitive cubic root of unity, as $r \in \mathbb{F}_{101^2}$ and is not a cube. We verify that $\phi^2 = [\zeta]\circ \pi_{101^2}$ acts on $E[7]$ as multiplication by 4, yielding the characteristic polynomial $X^2-4 \equiv (X-2)(X-5)\ \mathrm{mod}\,7$.
\end{remark}

%%%%%%%%%%%%

\section{Distortion maps for ordinary elliptic curves} \label{Section 3}

In this section we assume that $E$ is an ordinary elliptic curve defined over $\F_q$ and $\ell \neq p$ is a prime. In this situation, all the endomorphisms of $E$ are defined over $\F_q$ (see, for example, \cite[Corollary 2.2]{Wit01}). The following theorem of Verheul shows that points with an embedding degree greater than 1 do not admit a distortion map.

\begin{theorem}[\cite{Ver04}, Theorem 6]\label{Thm no distortion map}
Assume $\ell \mid \# E(\F_q)$ and let $P \in E(\F_q)$ be a point of order $\ell$. If $E[\ell]$ is not defined over $\F_q$ (in particular, if the embedding degree $k(q,\ell) > 1$), then there exist no distortion maps for $P$. 
\end{theorem}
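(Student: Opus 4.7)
The plan is to exploit the fact that $E$ is ordinary so that $\End(E)$ is an order in the imaginary quadratic field $K_E = \End(E) \otimes_\Z \Q$, and is therefore commutative. In particular, every $\phi \in \End(E)$ commutes with the Frobenius $\pi_q$. Since $P \in E(\F_q)$ means $\pi_q(P) = P$, the computation
$$\pi_q(\phi(P)) = \phi(\pi_q(P)) = \phi(P)$$
shows that $\phi(P)$ is fixed by $\pi_q$, hence $\phi(P) \in E(\F_q) \cap E[\ell] = E(\F_q)[\ell]$.

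Next, I would argue that $E(\F_q)[\ell] = \langle P \rangle$. Since $P$ has order $\ell$, clearly $\langle P \rangle \subseteq E(\F_q)[\ell]$. For the reverse inclusion, recall that $E(\F_q)[\ell]$ is precisely the $1$-eigenspace of $\pi_q|_{E[\ell]}$, seen as an $\F_\ell$-linear endomorphism of $E[\ell] \simeq (\Z/\ell\Z)^2$. This eigenspace has dimension $1$ or $2$. If it were $2$-dimensional, then $\pi_q$ would act as the identity on all of $E[\ell]$, which would force $E[\ell] \subseteq E(\F_q)$, contradicting the hypothesis. So the $1$-eigenspace is exactly $\langle P \rangle$, and $\phi(P) \in \langle P \rangle$, showing $\phi$ is not a distortion map for $P$.

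The only subtle point, which I expect to be the main thing to get right, is to observe that the stated hypothesis ``$E[\ell]$ is not defined over $\F_q$'' is slightly stronger than ``$k(q,\ell) > 1$'', and the $2$-dimensional eigenspace argument above covers both cases uniformly. Indeed, when $k(q,\ell) > 1$ the characteristic polynomial $X^2 - tX + q$ of $\pi_q$ reduces modulo $\ell$ to $(X-1)(X-q)$ with $1 \not\equiv q$, so $\pi_q|_{E[\ell]}$ automatically has a $1$-dimensional $1$-eigenspace; but when $k(q,\ell) = 1$ the reduction is $(X-1)^2$ and one must exclude the case where $\pi_q|_{E[\ell]}$ is the identity (equivalently a trivial Jordan block), which is exactly the content of the assumption that $E[\ell]$ is not defined over $\F_q$. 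Commutativity of $\End(E)$ is the crucial ingredient, and it is precisely what fails in the supersingular case covered in Section \ref{Section 4}.
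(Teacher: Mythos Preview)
Your proof is correct and follows essentially the same approach as the paper: both use commutativity of $\End(E)$ (from ordinariness) to show $\pi_q(\phi(P)) = \phi(\pi_q(P)) = \phi(P)$, forcing $\phi(P) \in E(\F_q)[\ell] = \langle P \rangle$. You give more detail than the paper does on why $E(\F_q)[\ell] = \langle P \rangle$ via the eigenspace argument, and your final paragraph explaining the relationship between the two hypotheses is a nice clarification that the paper's proof leaves implicit.
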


\begin{proof}
We have that $E[\ell]\cap E(\mathbb{F}_q) = \< P \>$. Let $\phi \in \End(E)$ be any isogeny, and $\pi_q \in \End(E)$ be the $q$-th power Frobenius. Recall that a point $Q$ is defined over $\F_q$ if and only if it is fixed by $\pi_q$. Observe that $\phi(P) \in E[\ell]$ and, since $\End(E)$ is commutative,
$$
\pi_q(\phi(P)) = \phi(\pi_q(P))= \phi(P),
$$
so $\phi(P) \in \< P \>$. Hence, $\phi$ is not a distortion map for $P$.
\end{proof}

\begin{comment}

Charles studied which subgroups of $E[\ell]$ admit a distortion map in terms of the prime decomposition of $\ell$ in the endomorphism algebra of $E$. We shall introduce the Legendre symbol to present these results:

\begin{definition}[Legendre symbol]
Let $D$ be an integer and $\ell$ a prime. For $\ell > 2$, we define
$$
\lp \frac{D}{\ell} \rp =
\left\{\begin{matrix*}[l]
\ 1 & \quad \text{if } D \text{ is a quadratic residue modulo } \ell, \text{ and } \ell \nmid D, \\
-1 & \quad \text{if } D \text{ is a quadratic non-residue modulo } \ell, \\
\ 0 & \quad \text{if } \ell \mid D.
\end{matrix*}\right.
$$
For $\ell = 2$, we set $\lp \frac{D}{2} \rp = 1$ for $D \equiv \pm 1 \textup{ mod } 8$, $\lp \frac{D}{2} \rp = -1$ for $D \equiv \pm 3 \textup{ mod } 8$, and $\lp \frac{D}{2} \rp = 0$ if $2 \mid D$.
\end{definition}

\begin{remark}
It is a well-know fact in algebraic number theory that, if $K$ is a quadratic imaginary number field with fundamental discriminant $D_K$, then $\ell$ splits in $K$ if $\lp \frac{D_K}{\ell} \rp = 1$, $\ell$ ramifies in $K$ if $\lp \frac{D_K}{\ell} \rp = 0$, and $\ell$ remains inert in $K$ if $\lp \frac{D_K}{\ell} \rp = -1$. 
\end{remark}
\end{comment}

Since $\End(E)$ is an order in a quadratic imaginary number field, one can apply Proposition \ref{Proposition Legendre symbol} to its generator to determine the existence of a distortion map for different points of $E$:

\begin{theorem}\label{Thm Charles}
Let $D = \Disc(\End(E))$. Then,
\begin{enumerate}
    \item If $\lp \frac{D}{\ell} \rp = 1$, all but two cyclic subgroups of $E[\ell]$ admit a distortion map.
    \item If $\lp \frac{D}{\ell} \rp = -1$, then $E[\ell]$ admits a distortion map.
    \item If $\lp \frac{D}{\ell} \rp = 0$, then all but one cyclic subgroup of $E[\ell]$ admit a distortion map.
\end{enumerate}
\end{theorem}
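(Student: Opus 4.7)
The plan is to reduce the theorem to Proposition \ref{Proposition Legendre symbol} applied to a single well-chosen generator of $\End(E)$, and then to use commutativity of $\End(E)$ (coming from the ordinary hypothesis) to show that no distortion map at all exists for the exceptional cyclic subgroups. The first step exploits the fact that every order in an imaginary quadratic field is monogenic over $\Z$. Since $E$ is ordinary, $\End(E)$ is such an order, so I may fix $\phi \in \End(E)$ with $\End(E) = \Z[\phi]$. For this $\phi$ one has $K_\phi = \End(E) \otimes_\Z \Q$, and therefore $\End(E) \cap K_\phi = \End(E) = \Z[\phi]$; in particular the index $[\End(E) \cap K_\phi : \Z[\phi]] = 1$ is trivially coprime to $\ell$, and $D_\phi = \Disc(\Z[\phi]) = D$.

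The second step is simply to invoke Proposition \ref{Proposition Legendre symbol} for this $\phi$. The trichotomy there is exactly the trichotomy in the theorem: when $\lp \frac{D}{\ell} \rp = -1$ the endomorphism $\phi$ is a distortion map for all of $E[\ell]$, while when $\lp \frac{D}{\ell} \rp = 1$ or $0$ it is a distortion map for all but two, respectively all but one, cyclic subgroups of $E[\ell]$. For every cyclic subgroup not in the exceptional set, $\phi$ itself serves as the required witness, which yields the ``admit a distortion map'' part of each item.

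The third step, which is the genuinely new ingredient relative to Proposition \ref{Proposition Legendre symbol}, is to show that in cases (1) and (3) the excluded cyclic subgroups admit no distortion map from \emph{any} endomorphism of $E$. For this I would use monogenicity once more: every $\psi \in \End(E)$ can be written as $a + b\phi$ with $a, b \in \Z$, so $\psi \vert_{E[\ell]}$ shares its eigenvectors with $\phi \vert_{E[\ell]}$. Concretely, if $P \in E[\ell]$ is a nonzero eigenvector of $\phi \vert_{E[\ell]}$ with eigenvalue $\lambda \in \F_\ell$, then $\psi(P) = (a + b\lambda)P \in \<P\>$, and hence no $\psi$ can distort $P$.

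The main obstacle has already been absorbed into Proposition \ref{Proposition Legendre symbol}; given that proposition, the only delicate step here is the last one, where one must pass from ``$\phi$ is not a distortion map for certain subgroups'' to ``no endomorphism at all distorts those subgroups''. This is precisely the place where the ordinary hypothesis, in the guise of commutativity of $\End(E)$, is used in an essential way, and it is also the step that will fail in the supersingular setting of the next section.
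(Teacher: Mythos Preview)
Your proposal is correct and follows essentially the same route as the paper: pick a generator $\omega$ of the monogenic order $\End(E)$, observe that the index $[\End(E)\cap K_\omega:\Z[\omega]]=1$ so that Proposition~\ref{Proposition Legendre symbol} applies with $D_\omega = D$, and then use the fact that every $\psi\in\End(E)$ is of the form $a+b\omega$ to conclude that the exceptional cyclic subgroups admit no distortion map from any endomorphism. The paper phrases the last step as a contrapositive (``if $\omega(P)\in\langle P\rangle$ then $\phi(P)\in\langle P\rangle$''), but this is exactly your eigenvector argument.
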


\begin{proof}
Let $\omega$ be a generator of $\End(E)$, \textit{i.e.}, $\End(E) = \Z[\omega]$. Then any isogeny $\phi \in \End(E)$ can be written as $\phi = a + b\omega$ for some $a,b \in \Z$. For a given point $P \in E[\ell]$, if $\phi$ is a distortion map for $P$, then so is $\omega$. Indeed by contrapositive, one clearly sees that if $\omega(P) \in \<P\>$, then $\phi(P) \in \<P\>$. Hence, a given point $P$ admits a distortion map if, and only if, $\omega$ is a distortion map for $P$.\svs

Let $K = \End(E) \otimes \Q$ be the endomorphism algebra of $\omega$. Then, using the notation of Proposition \ref{Proposition Legendre symbol}, we have that $K_\omega = K$ and $D_\omega = D$. Since $\left[\End(E) \cap K_\omega : \Z[\omega] \right] = 1$ in this setting, the result follows from Proposition \ref{Proposition Legendre symbol}.
\end{proof}

\begin{remark}
Charles \cite[Examples 3.1, 3.2, 3.3 and 3.4]{Cha06} gave examples satisfying the different cases from the previous theorem. Theorem \ref{Thm Charles} is a reformulation of Charles' \cite[Theorem 2.1]{Cha06}. However, the statement is slightly different from Charles', as we corrected some imprecision: Charles' Theorem 2.1 states that there are no points of $E[\ell]$ that admit a distortion map when $\ell \mid [\mathcal{O}_K : \End(E)]$. Theorem \ref{Thm Charles} shows that, when $\ell \mid [\mathcal{O}_K : \End(E)]$, all but one cyclic subgroup of $E[\ell]$ admit a distortion map. Let us now provide a computational example, extending the idea of Example 3.4 in Charles', where this case has been tested.
\end{remark}

\begin{example}[A case where $\ell \mid {[\mathcal{O}_K:\mathcal{O}]}$] \label{example_conductor}

    Let $\mathbb{F}_{13^3} = \mathbb{F}_{13}[X]/(X^3 + 2X + 11)$ and consider the curve $E: y^2 = x^3 + x + 5 \,/ \, \mathbb{F}_{13^3}$. Using SageMath, we compute that $\#E(\mathbb{F}_{13^3}) =2^2 \cdot 3^4 \cdot 7 $, and therefore the trace of the $13^3$-th power Frobenius $\pi_{13^3}$ is $t=-70$ and the order $\Z[\pi_{13^3}]$ has discriminant $D_{\pi_{13^3}} = - 3888$. Observe that $E$ is ordinary. Observe also that $K = \Z[\pi_{13^3}] \otimes \Q = \Q(\sqrt{-3})$, which has fundamental discriminant $D_K = -3$.
    We will argue that the conductor $[\mathcal{O}_K:\End(E)]$ is $3$ as follows. Let $E^\prime/\mathbb{Q}$ be given by the equation
    \[E^\prime : y^2 = x^3 - \frac{120000000}{64009}x + \frac{2000000000}{64009},\] which reduces to $E$ mod $13$. It holds that the $j$-invariant $j(E^\prime) = -12288000$ is a root of the Hilbert class polynomial $H_{-27} = H_\mathcal{O}$, for $\mathcal{O}$ being the order of conductor $3$ in the maximal order $\Z[(1+\sqrt{-3})/2]$, as $-27 = 3^2 \cdot D_K$. Therefore $\mathcal{O} \cong \mathrm{End}(E^\prime)$. We refer to \cite{Sut23} for the theory regarding the Hilbert class polynomial. Since $13$ splits in $K$, the Deuring reduction theorem \cite[Ch. 13, Thm. 12]{Lan87} asserts that $\mathrm{End}(E) \cong \mathrm{End}(E^\prime) \cong \mathcal{O}$.\svs
    %(*To-do* fill the gaps that $[\mathcal{O}_K:\mathcal{O}]=3$.)

We will now construct the distortion map. Pick the $9$-torsion point $(3,3)$ of $E$, which by V\'elu's method \cite{Vel71}, induces a $9$-degree isogeny $\phi: E\rightarrow E$, given by the rational map
\[\phi(x,y) = \left(\frac{f_1(x)}{f_2(x)},  \frac{g_1(x,y)}{g_2(x,y)}\right),\] where 
\begin{align*}
f_1(x)&= x^9 + x^8 + 4x^7 + x^6 - 6x^5 - x^4 + 5x^3 - 6x^2 - 5, \\
f_2(x)&= x^8 + x^7 + 4x^6 + x^5 - 6x^4 - x^3 + 3x^2 + 5x + 4, \\
g_1(x,y)&= x^{12}y - 5x^{11}y - 5x^{10}y + 2x^9y + 2x^8y - x^7y - 2x^6y + 5x^4y - 4x^3y - 2x^2y + 3xy - 6y, \\
g_2(x,y)&= x^{12} - 5x^{11} - 5x^{10} + 2x^9 + 2x^8 - x^7 - 5x^6 + 2x^5 + x^4 - 5x^3 + 5x^2 - 2x + 5.
\end{align*}

Let $P=(12,4)$ be a $3$-torsion point which is defined over $\mathbb{F}_{13}$, then it holds that $E[3]\cap E(\mathbb{F}_{13}) = \langle P \rangle$. We verify that $\phi$ maps $\langle P \rangle$ to the point at infinity of $E$, while it distorts the points $Q\in E[3]\setminus E(\mathbb{F}_{13})$, as we compute that $\phi(Q)\in \langle P \rangle$. For $\ell=3$, this example showcases the $\mathbb{F}_\ell[x]/(x^2)$ structure of the action of $\mathrm{End}(E)$ on $E[\ell]$, as was determined in the proof of Proposition \ref{Proposition Legendre symbol} in the case of $\ell$ dividing the conductor $f^\prime$.
\end{example}

\begin{remark}
Since $E$ is ordinary, the trace $t$ of $\pi_q$ is not divisible by $p$, and in particular is non-zero. Hence, Hasse's theorem tells us that $t^2-4q < 0$, which implies that $\pi_q \notin \Z$. Thus, one can apply Corollary \ref{Corollary distortion E[m]} to find a suitable $m$ for which $\pi_q$ is a distortion map for $E[m]$. We illustrate this with an example.
\end{remark}

\begin{example}
Let $\F_{361} = \F_{19^2} := \F_{19}[X]/(X^2+18X+2)$ and let $a$ be the image of $X$ in $\F_{361}$. Consider the elliptic curve $E : y^2 = x^3 + a \ / \, \F_{361}$. Let $\pi_{361}$ be the $361$-st power Frobenius endomorphism. Using a computer algebra system, one deduces that $\# E(\F_{361}) = 325$. Its norm is $n = 361$ and its trace is $t = 361 + 1 - \# E(\F_{19^2}) = 37$. Observe that $E$ is ordinary. The discriminant of $\Z[\pi_{361}]$ is $D_{\pi_{361}} = t^2 - 4n = -75$. The smallest two primes $\ell$ with $\lp \frac{-75}{\ell} \rp = -1$ are 2 and $11$. Hence, $\pi_{361}$ is a distortion map for $E[22]$.\svs

One can computationally check that the smallest $n$ for which $\# E(\F_{361^n})$ is divisible by 22 is $60$, so $\F_{19^{120}}$ is the smallest field over which there is a point $P$ of order 22. By Theorem \ref{Thm no distortion map} we know that, in fact, $E[22]$ must be defined over $\F_{19^{120}}$. Indeed, since $E[2]$ admits a distortion map, the smallest field $\F_q$ with a 2-torsion point must also be the minimal field of definition of $E[2]$. The same reasoning holds for $E[11]$. Hence, both $E[2]$ and $E[11]$ are defined over $\F_{19^{120}}$, and thus so is $E[22]$.
\end{example}

\begin{example}[Isogeny transfer of \cite{Cha06}, Example 3.2] 
The ordinary elliptic curve in question is $E: y^2 = x^3 - 35x + 98$ over $\mathbb{F}_{701}$, with endomorphism ring $\operatorname{End}(E) = \Z \left[ \frac{1+\sqrt{-7}}2 \right]$. Observe that the discriminant $D$ of $\End(E)$ is -7, and that $\lp \frac{-7}{5} \rp = -1$, so Theorem \ref{Thm Charles} tells us that $E[5]$ admits a distortion map. The distortion map $\phi$ for $E[5]$ given in \cite{Cha06} corresponding to multiplication by $\alpha = \frac{1+\sqrt{-7}}2$ is the rational map
\[ \phi(x,y) = \left( \alpha^{-2} \left(x- \frac{7(x-\alpha)^4}{x+\alpha^2-2}\right), \alpha^{-3}y \left( 1 + \frac{7(1-\alpha)^4}{(x+\alpha^2-2)^2}\right) \right), \]
and $\alpha$ reduces to $386 \in \mathbb{F}_{701}$. Pick the $2$-torsion point $P=(319,0)$ on $E$, then the group theoretic quotient $E\longrightarrow E/\langle P \rangle$ corresponds to a degree $2$ isogeny of elliptic curves
$$
\begin{matrix}
\psi : & E & \longrightarrow & E' \\
& (x,y) & \longmapsto & \left(   \frac{x^2 - 319x + 313}{x - 319}, \frac{x^2y + 63xy - 197y}{x^2 + 63x + 116}   \right),
\end{matrix}
$$
where $E^\prime$ is given by the equation $y^2 = x^3 + 503x + 66$ over $\mathbb{F}_{701}$. We computed this equation using SageMath, but the procedure can be found in V\'{e}lu's paper \cite{Vel71}. The dual isogeny is given by
$$
\begin{matrix}
\widehat{\psi} : & E' & \longrightarrow & E \\
& (x,y) & \longmapsto & \left(   \frac{-175x^2 - 191x - 52}{x - 63},
 \frac{263x^2y - 191xy + 84y}{x^2 - 126x - 237}    \right).
\end{matrix}
$$

Now Corollary \ref{Cor transfer} gives that $\phi^\prime := \psi \circ \phi \circ \widehat{\psi}$ is an appropriate distortion map for $E^\prime[5]$, which we also verify by computation for all points $E^\prime[5]$. For instance, pick $Q=(675,16)$ a $5$-torsion point on $E^\prime$. By computation, we have that $e_5 (Q, \phi^\prime(Q)) = 638 \neq 1$, which is a fifth root of unity in $\mathbb{F}_{701}$.

%%%%%%%%%%%%%%%%%%%%%%%%%%%%%%%%%%%%%%%
%  \phi^\prime formula: if we need it
%%%%%%%%%%%%%%%%%%%%%%%%%%%%%%%%%%%%%%%

%\widehat{\alpha} (x,y) = \left(  \frac{-201x^8 - 211x^7 - 187x^6 + 135x^5 + 309x^4 + 234x^3 - 183x^2 - 242x - 284}{-296x^7 - 234x^6 + 276x^5 + 189x^4 - 14x^3 + 151x^2 - 276x + 73} , 
 %\frac{156x^{12}y + 5x^{11}y - 41x^{10}y - 126x^9y - 194x^8y + 31x^7y + 153x^6y - 252x^5y + 317x^4y - 40x^3y - 52x^2y + 132xy + 311y}{167x^{12} - 98x^{11} + 102x^{10} - 218x^9 - 12x^8 - 253x^7 + 155x^6 + 57x^5 - 269x^4 + 9x^3 - 254x^2 - 297x + 178}
%\right) \end{align*} 
\end{example}

%%%%%%%%%%%%%%%%%%%%%%%%%%%

\section{Distortion maps for supersingular elliptic curves} \label{Section 4}

Verheul \cite[Theorem 5]{Ver04} proved the existence of distortion maps for any point $P$ of a supersingular elliptic curve. His proof is based on the following theorem of Tate, which relates the endomorphism ring of $E$ with the endomorphism ring of its Tate module $T_\ell(E)$.

\begin{theorem}[\cite{Tat66}]\label{Theorem Tate}
Let $K$ be a finite field and $E / K$ be an elliptic curve. Fix a prime $\ell \neq \characteristic(K)$. Let $G = \Gal(\overline{K}/K)$ and $\End_G(T_\ell(E))$ denote the ring of Galois-equivariant endomorphisms of $T_\ell(E)$. The canonical map
$$
\End_{K}(E) \otimes \Z_\ell \longrightarrow \End_G(T_\ell(E))
$$
is an isomorphism of rings.
\end{theorem}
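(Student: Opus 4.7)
The plan is to establish injectivity via the faithfulness of $\End_K(E)$ on $\ell$-power torsion, and then to obtain the much harder surjectivity from Tate's finiteness theorem on isomorphism classes of abelian varieties in a fixed isogeny class over a finite field.

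For injectivity, I would use that $\End_K(E)$ is a torsion-free $\Z$-module of finite rank $r$; fixing a $\Z$-basis $\phi_1, \ldots, \phi_r$, an element $\sum a_i \phi_i$ with $a_i \in \Z_\ell$ lying in the kernel of the canonical map means that every integer approximation $\sum b_i^{(n)} \phi_i$ agreeing with $\sum a_i \phi_i$ modulo $\ell^n$ annihilates $E[\ell^n]$. Since a non-zero endomorphism of $E$ has finite kernel of cardinality equal to its degree, no such endomorphism can vanish on $E[\ell^n]$ for arbitrarily large $n$. Combined with the $\Q_\ell$-linear independence of the induced actions of $\phi_1, \ldots, \phi_r$ on $T_\ell(E) \otimes_{\Z_\ell} \Q_\ell$, this forces every $a_i$ to be zero.

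For surjectivity, given $u \in \End_G(T_\ell(E))$, for each $n \geqslant 1$ I consider the reduction $u_n : E[\ell^n] \to E[\ell^n]$ of $u$ modulo $\ell^n$, which remains $G$-equivariant. Its graph
\[
\Gamma_n = \{(P, u_n(P)) : P \in E[\ell^n]\} \subseteq (E \times E)[\ell^n]
\]
is a Galois-stable finite subgroup, hence descends to a finite $K$-subgroup scheme of $E \times E$; the quotient abelian surface $A_n := (E \times E)/\Gamma_n$ is $K$-isogenous to $E \times E$. By Tate's finiteness theorem, the isogeny class of $E \times E$ over $K$ contains only finitely many isomorphism classes. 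A pigeonhole argument then yields an infinite subsequence along which the $A_{n_i}$ are mutually isomorphic; composing the quotient maps with these isomorphisms produces a sequence in $\End_K(E \times E)$ from which, after taking an $\ell$-adic limit and extracting the appropriate entry of the associated $2 \times 2$ block, one recovers the desired preimage of $u$ in $\End_K(E) \otimes \Z_\ell$.

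The decisive obstacle is Tate's finiteness statement itself: it rests on the Weil bounds constraining the characteristic polynomials of Frobenius to a finite set (and hence finitely many isogeny classes over a fixed finite field), together with a lattice-counting argument bounding the number of isomorphism classes within one isogeny class. The subsequent pigeonhole-plus-limit extraction is also delicate, as one must verify that the endomorphisms of $E \times E$ thus obtained are compatible under reduction modulo $\ell^n$ and converge $\ell$-adically to an endomorphism whose action on $T_\ell(E)$ reproduces $u$.
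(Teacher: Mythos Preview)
The paper does not prove this theorem at all: it is stated with the citation \cite{Tat66} and used as a black box in the proof of Lemma~\ref{lemma_surjectivity}. There is therefore no ``paper's own proof'' to compare against.

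That said, your outline is the standard shape of Tate's original argument for abelian varieties over finite fields, specialized to elliptic curves. Injectivity is indeed the easy half and your sketch is correct in spirit (a cleaner version: $\End_K(E)$ is free of finite rank over $\Z$, so $\End_K(E)\otimes\Z_\ell$ is free over $\Z_\ell$, and the map to $\End(V_\ell(E))$ on the rational Tate module is already injective by the usual degree argument). For surjectivity, the graph-quotient construction $A_n=(E\times E)/\Gamma_n$ and the appeal to finiteness of isomorphism classes in an isogeny class is exactly Tate's method. Your description of the ``decisive obstacle'' is accurate: the finiteness input is what makes the theorem non-trivial, and over finite fields it comes from the Weil conjectures bounding Frobenius eigenvalues together with a lattice argument. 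The extraction of the limit endomorphism is indeed somewhat delicate but can be made precise; one shows that the resulting maps in $\End_K(E\times E)\otimes\Z_\ell$ stabilize $\ell$-adically, and the off-diagonal block recovers $u$.

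In short: there is no discrepancy to flag, because the paper offers no proof. Your sketch is a faithful summary of Tate's approach and would be appropriate as an expository proof, though of course filling in the finiteness theorem and the limit argument rigorously is substantial work.
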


We shall extend Verheul's argument to prove the existence of a distortion map for $E[m]$ for any $m$ coprime to $p$.\svs

Let $E / \F_q$ be a supersingular elliptic curve and $m \in \Z$ coprime to $p$. Given an isogeny $\phi \in \End(E)$, its restriction to $E[m]$ defines a $\Z/m\Z$-linear endomorphism of $E[m]$. This defines a ring homomorphism:
$$
\begin{matrix}
\textup{res}_m : & \End(E) & \longrightarrow & \End_{\Z/m\Z}(E[m])  \\
& \phi & \longmapsto & \phi \vert_{E[m]}
\end{matrix}.
$$

\begin{lemma}\label{lemma_surjectivity}
The map $\textup{res}_m$ is surjective.
\end{lemma}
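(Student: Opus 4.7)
The plan is to combine the Chinese Remainder Theorem with Tate's theorem (Theorem \ref{Theorem Tate}) after a suitable finite extension of $\F_q$. First I would reduce the statement to the prime power case. Writing $m = \prod_i \ell_i^{k_i}$, one has $E[m] = \bigoplus_i E[\ell_i^{k_i}]$ as $\Z/m\Z$-modules, and hence $\End_{\Z/m\Z}(E[m]) \cong \prod_i \End_{\Z/\ell_i^{k_i}\Z}(E[\ell_i^{k_i}])$. Given a target $\psi$ with components $\psi_i$, if each $\psi_i$ admits a lift $\widetilde\phi_i \in \End(E)$ under $\textup{res}_{\ell_i^{k_i}}$, then choosing by CRT integers $a_i$ with $a_i \equiv \delta_{ij} \pmod{\ell_j^{k_j}}$, the endomorphism $\phi = \sum_i a_i \widetilde\phi_i$ satisfies $\textup{res}_m(\phi) = \psi$, because $a_i$ acts on $E[\ell_j^{k_j}]$ as the scalar $\delta_{ij}$. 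This reduces the problem to the surjectivity of $\textup{res}_{\ell^k}$ for every prime power $\ell^k$ with $\ell \neq p$.

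Next I would extend the base field. Since $\End(E)$ is a finitely generated $\Z$-module of rank $4$ (as $E$ is supersingular), I can pick $r \geq 1$ large enough so that $\End(E) = \End_{\F_{q^r}}(E)$. For this $r$, every $\phi \in \End(E)$ is defined over $\F_{q^r}$ and therefore satisfies $\pi_{q^r} \circ \phi = \phi \circ \pi_{q^r}$. Thus $\pi_{q^r}$ lies in the center of $\End(E) \otimes \Q$, which is a quaternion algebra over $\Q$ with center $\Q$. Combined with $\pi_{q^r} \in \End(E)$ and $\End(E) \cap \Q = \Z$, this forces $\pi_{q^r}$ to be a rational integer.

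Applying Tate's theorem with $K = \F_{q^r}$ and $G = \Gal(\overline{\F}_q/\F_{q^r})$, the group $G$ is topologically generated by $\pi_{q^r}$, which acts on $T_\ell(E)$ as multiplication by an integer and therefore commutes with every $\Z_\ell$-linear endomorphism. Consequently $\End_G(T_\ell(E)) = \End_{\Z_\ell}(T_\ell(E)) \cong M_2(\Z_\ell)$, and Theorem \ref{Theorem Tate} yields a ring isomorphism $\End(E) \otimes \Z_\ell \cong \End_{\Z_\ell}(T_\ell(E))$. Reducing modulo $\ell^k$ via the canonical identification $T_\ell(E)/\ell^k T_\ell(E) \cong E[\ell^k]$, the induced composition agrees with $\textup{res}_{\ell^k}$ and is surjective.

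The main obstacle is the centrality step. Tate's theorem alone only equates $\End_K(E) \otimes \Z_\ell$ with the commutant of the Galois action on $T_\ell(E)$; in the ordinary case the left-hand side has $\Z_\ell$-rank $2$ and this commutant is a proper subring of $M_2(\Z_\ell)$, which is precisely why Theorem \ref{Thm no distortion map} holds there. The supersingular rank-$4$ structure, together with the possibility of realizing all endomorphisms over a single finite extension so that $\pi_{q^r}$ becomes scalar, is exactly what upgrades Tate's identification to the full matrix algebra and makes the surjectivity of $\textup{res}_m$ available.
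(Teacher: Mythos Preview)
Your proof is correct and follows essentially the same strategy as the paper: pass to a finite extension over which all endomorphisms are defined so that the Frobenius becomes central (hence an integer), apply Tate's theorem to identify $\End(E)\otimes\Z_\ell$ with the full $\End_{\Z_\ell}(T_\ell(E))$, reduce modulo $\ell^k$, and assemble the prime-power cases via the Chinese Remainder Theorem. The only cosmetic difference is that you perform the CRT reduction first while the paper does it last.
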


\begin{proof}
Let $\ell \neq p$ be a prime. Let $K = \F_{q^k}$ be the minimal field of definition of $\End(E)$. Let $\sigma \in \Gal(\overline{\F}_q / \F_{q^k})$ be the $q^k$-th power Frobenius map, and $\pi_{q^k} \in \End(E)$ be the $q^k$-th power Frobenius endomorphism. Then, for any isogeny $\phi \in \End(E)$, since $\phi$ is defined over $K$, it commutes with the action of $\sigma$. Since $\sigma$ and $\pi_{q^k}$ act in the same way, $\phi$ commutes with $\pi_{q^k}$. Thus, $\pi_{q^k}$ lies in the center of $\End(E)$, \textit{i.e.}, $\pi_{q^k} \in \Z$. Consequently, $\pi_{q^k}$ also lies in the center of $\End(T_\ell(E))$. As $\pi_{q^k}$ acts on $T_\ell(E)$ as $\sigma$, then any endomorphism of $T_\ell(E)$ is $\Gal(\overline{\F}_q / \F_{q^k})$-equivariant. Thus, Theorem \ref{Theorem Tate} establishes an isomorphism
$$
\End(E)\otimes \Z_\ell \rightarrow \End(T_\ell(E)).
$$

The surjection $\End(T_\ell(E)) \onto \End_{\Z/\ell^\alpha\Z}(E[\ell^\alpha])$ induces a surjection
$$
\textup{res}_{\ell^\alpha}:\End(E) \onto \End_{\Z/\ell^\alpha\Z}(E[\ell^\alpha])
$$
for any $\alpha \geqslant 1$. Consider the prime factorization $m = \ell_1^{\alpha_1} \cdots \ell_n^{\alpha_n}$. By the Chinese remainder theorem,
$$
E[m] \iso \prod_{i=1}^n E[\ell_i^{\alpha_i}],
$$
which induces an isomorphism
\begin{equation}\label{iso endomorphisms}
\End_{\Z/m\Z}(E[m]) \iso \prod_{i=1}^n \End_{\Z/\ell_i^{\alpha_i}\Z}(E[\ell_i^{\alpha_i}]).
\end{equation}

We claim that the map
$$
\prod_{i=1}^n \textup{res}_{\ell_i^{\alpha_i}} : \End(E) \to \prod_{i=1}^n \End_{\Z/\ell_i^{\alpha_i}\Z}(E[\ell_i^{\alpha_i}])
$$
is surjective. Indeed, given $(\lambda_1, \ldots, \lambda_n)$ in the codomain, take $\phi_i \in \End(E)$ with $\textup{res}_{\ell_i^{\alpha_i}}(\phi_i) = \lambda_i$, and integers $a_i$ congruent to 1 modulo $l_i^{\alpha_i}$ and to 0 modulo $l_j^{\alpha_j}$ for $j \neq i$. Then, the isogeny $\sum_{i=1}^n a_i \phi_i$ restricts to $(\lambda_1, \ldots, \lambda_n)$. Since the isomorphism (\ref{iso endomorphisms}) respects the restriction maps, the result follows.
\end{proof}

A consequence of the previous lemma is that any point $P$ of order coprime to $p$ admits a distortion map. In fact, the following stronger statement holds:

\begin{theorem}
$E[m]$ admits a distortion map.
\end{theorem}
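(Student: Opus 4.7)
The plan is to reduce the problem to finite linear algebra via Lemma \ref{lemma_surjectivity} and then hand-pick a suitable $2\times 2$ matrix prime by prime. Since $E[m]$ is a free $\Z/m\Z$-module of rank two, $\End_{\Z/m\Z}(E[m]) \iso M_2(\Z/m\Z)$, and the prime factorization $m = \ell_1^{\alpha_1} \cdots \ell_n^{\alpha_n}$ together with the Chinese remainder theorem yields an isomorphism
$$
\End_{\Z/m\Z}(E[m]) \iso \prod_{i=1}^n M_2(\Z/\ell_i^{\alpha_i}\Z).
$$

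First I would invoke the lemma preceding Corollary \ref{Corollary distortion E[m]}: it is enough to construct $\phi$ whose restriction is a distortion map on each $E[\ell_i]$. Working component by component, I would take $M_i \in M_2(\Z/\ell_i^{\alpha_i}\Z)$ to be any lift of the companion matrix of a monic irreducible quadratic over $\F_{\ell_i}$ — for instance $x^2 - c$ with $c$ a quadratic non-residue when $\ell_i$ is odd, and $x^2 + x + 1$ when $\ell_i = 2$. Assembling the $M_i$ into a single $M \in \End_{\Z/m\Z}(E[m])$ via CRT and invoking the surjectivity in Lemma \ref{lemma_surjectivity} would then produce some $\phi \in \End(E)$ with $\textup{res}_m(\phi) = M$.

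Finally I would verify that $\phi$ has no eigenvectors in any $E[\ell_i]$. Through the CRT identifications, $\phi\vert_{E[\ell_i^{\alpha_i}]}$ is represented by $M_i$, and via the embedding $E[\ell_i] \iso \ell_i^{\alpha_i-1} E[\ell_i^{\alpha_i}] \hookrightarrow E[\ell_i^{\alpha_i}]$ the restriction $\phi\vert_{E[\ell_i]}$ is represented by the reduction $\bar{M_i} \in M_2(\F_{\ell_i})$. By construction, its characteristic polynomial is irreducible over $\F_{\ell_i}$, so $\bar{M_i}$ admits no eigenvectors in $(\F_{\ell_i})^2$, which means $\phi(P) \notin \langle P\rangle$ for every non-zero $P \in E[\ell_i]$, as required.

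The main step is really Lemma \ref{lemma_surjectivity}: once we know that every $\Z/m\Z$-linear endomorphism of $E[m]$ is induced by some element of $\End(E)$, the rest is purely a matter of producing, for each prime $\ell_i$, a $2\times 2$ matrix with irreducible characteristic polynomial — an elementary exercise. I do not anticipate a serious obstacle here, since the genuine content of the theorem is concentrated in the Tate-module input already harnessed by Lemma \ref{lemma_surjectivity}, which in turn relies on the supersingular hypothesis through Tate's theorem.
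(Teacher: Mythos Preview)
Your proposal is correct and follows essentially the same route as the paper: reduce via Lemma \ref{lemma_surjectivity} to a matrix problem, use CRT to work prime by prime, and take companion matrices of irreducible quadratics ($x^2-c$ for odd $\ell$, $x^2+x+1$ for $\ell=2$). The only cosmetic difference is that the paper verifies the ``no eigenvector'' condition directly in $(\Z/m\Z)^2$ via the criterion $\det(\lambda I - A)\in(\Z/m\Z)^\times$ for all $\lambda$, whereas you invoke the earlier lemma to reduce the check to each $E[\ell_i]$ --- both arguments land on the same matrices and the same verification modulo $\ell_i$.
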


\begin{proof}
Fix a basis of $E[m]$, so any $\Z/m\Z$-linear endomorphism of $E[m]$ is given by a matrix $A \in \textup{M}_2(\Z/m\Z)$. We claim that $A$ has no eigenvectors $v \in (\Z/m\Z)^2 \setminus \{0\}$ if and only if
$$
\det(\lambda I - A) \in (\Z/m\Z)^\times
$$
for all $\lambda \in \Z/m\Z$. For the necessary condition, fix $\lambda \in \Z/m\Z$ and assume that $\det(\lambda I - A) \in (\Z/m\Z)^\times$. Then, $\lambda I - A$ is invertible. If $A v = \lambda v$ for some $v \in (\Z/m\Z)^2$, then $(\lambda I - A)v = 0$ and by multiplying with $(\lambda I - A)^{-1}$, we obtain that $v = 0$. This shows that $A$ has no eigenvector. For the sufficient condition, observe that, for any $\lambda \in \Z/m\Z$, the linear map
$$
\lambda I - A: (\Z/m\Z)^2 \longto (\Z/m\Z)^2
$$
is injective. Since $(\Z/m\Z)^2$ is finite, it is also surjective. If $e_1 = (1, 0)$ and $e_2 = (0,1)$, there are vectors $v_i \in (\Z/m\Z)^2$ with $(\lambda I - A)v_i = e_i$ for $i=1,2$. The matrix with columns $v_1, v_2$ is then the inverse of $\lambda I - A$, so $\det(\lambda I - A) \in (\Z/m\Z)^\times$.\svs

By Lemma \ref{lemma_surjectivity}, the statement reduces to finding $A \in \textup{M}_2(\Z/m\Z)$ for which $\det(\lambda I - A)$ is invertible for every $\lambda \in \Z/m\Z$. By the Chinese remainder theorem, it suffices to do this for $A \in M_2(\Z / \ell^\alpha \Z)$. If $\ell \neq 2$, let $a$ be a quadratic non-residue. Then, the polynomial $x^2 - a \in (\Z / \ell^\alpha \Z)[x]$ takes values in $(\Z / \ell^\alpha \Z)^\times$ for all $\lambda \in (\Z / \ell^\alpha \Z)$. Hence, the matrix
$$
A =
\begin{pmatrix}
0 & a \\ 1 & 0
\end{pmatrix}
$$
satisfies the desired property. For $\ell = 2$, one can take the matrix
$$
A =
\begin{pmatrix}
0 & 1 \\ 1 & 1
\end{pmatrix},
$$
with characteristic polynomial $x^2 - x - 1$, which also satisfies the desired property.
\end{proof}

\section{Algorithms and known distortion maps}\label{Section 5}

Having discussed properties and the question of existence of a distortion map $\phi \in \mathrm{End}(E)$ in the previous sections, we now turn to the literature to list specific algorithms for obtaining the rational coordinates of $\phi$. We will also list some known distortion maps in Tables \ref{table1} and \ref{table2}. Furthermore, we will restrict our attention to maps where $\phi$ is not the Frobenius endomorphism, as this case is well-known. The algorithms discussed will thus focus on finding distortion maps on the eigenspaces of the Frobenius endomorphism. This approach is advantageous for applications: it allows for the distortion of $\F_q$-points without requiring further field extensions.\svs

The main backbone of the algorithms for computing distortion maps on a curve $E$ is that they rely on finding a finite sequence of isogenies $\psi_i$,  $i=1,\ldots,c$ until the target space $E^\prime$ is an elliptic curve isomorphic to $E$, as follows:
\begin{equation}\label{eq:isogeny_seq}
E = E_0 \xrightarrow{\psi_1} E_1 \xrightarrow{\psi_2} \cdots \xrightarrow{\psi_{c-1}} E_{c-1} \xrightarrow{\psi_c} E_c = E^\prime \overset{\gamma}{\simeq} E.
\end{equation}

Denote $\psi:= \psi_c \circ \cdots \circ \psi_1$. Then, the candidate distortion map is defined as
\[ \phi := \gamma \circ \psi :E \rightarrow E.\]

In general, the primary computational cost of constructing $\phi$ lies in computing a suitable isogeny chain $\psi$ which will correctly yield a distortion map. In contrast, determining the isomorphism $\gamma$ is typically the easier part. The isogenies $\psi_i$ are computed using V\'{e}lu's formulas \cite{Vel71} by considering quotients of elliptic curves by finite cyclic subgroups. 
Assuming one has the information of the order $\mathrm{End}(E)$ and its discriminant $D$, the strategy for $\psi$ is then to be an isogeny of degree $|D|$, relating the intermediate isogenies $\psi_i$ with the prime divisors of $D$. 

\subsection{Algorithms for supersingular elliptic curves}

In the supersingular setting, the assumption that one understands $\mathrm{End}(E)$ is not particularly restrictive in practice, as Galbraith and Rotger explain \cite[Section 6]{GR04}. Indeed, the only known method for constructing a supersingular elliptic curve is via the Complex Multiplication method in characteristic $0$ paired with the Deuring reduction theorem. Consequently, one understands $\mathrm{End}(E)$ when constructing $E$. On the other hand, determining the endomorphism ring from the supersingular curve $E$ alone is a computationally hard problem, which typically underpins the security of cryptosystems built on isogenies of supersingular curves. Although Kohel's algorithm \cite{Koh96} can compute a basis for $\mathrm{End}(E)$, it runs in exponential time.\svs

We list below the primary references in the literature that have detailed algorithms for constructing distortion maps. From the previous discussion, these require as input the field $\mathbb{Q}(\sqrt{-d})$ by which the characteristic $0$ elliptic curve has complex multiplication, before the reduction to $\mathbb{F}_q$. As these algorithms aim to distort points $P$ from Frobenius eigenspaces, their correctness rely on $P$ not being in $\ker \left( \phi \pi_q - \pi_q \phi \right)$, where $\pi_q$ is the $\F_q$-Frobenius, see \cite[Lemma 5.1]{GR04}. Additionally, algorithms $(1),(2)$ rely also on \cite[Proposition 6.1]{GR04}. Here is the list:
\begin{enumerate}
    \item Moody's algorithm \cite[Algorithm 2]{Moo09}.
    \item Galbraith and Rotger's algorithm \cite[Algorithm 1]{GR04}.
    \item Constructive Deuring Correspondence by Eriksen et al \cite{EPSV24}.
\end{enumerate}

Despite sharing the structural approach of Equation (\ref{eq:isogeny_seq}), the algorithms employ distinct methods to find the isogeny $\psi$. Moody utilizes the Weierstrass $\wp$-function up to a certain precision alongside V\'{e}lu's formulas. It is important to note that the streamlined description of the algorithm in the journal version \cite{Moo09} may hide this fact, but the idea is described in greater detail in his PhD thesis \cite[Alg. 3]{Moo09thesis}.\svs

Galbraith and Rotger \cite[Theorem 5.2]{GR04} gave an alternative constructive proof for the existence of a distortion map for the points $P \in E[\ell]$ laying in an eigenspace of $\pi_q \mid_{E[\ell]}$ when $k(q,\ell) > 1$, which does not rely on the Tate module. Their algorithm is derived from this constructive proof. In order to arrive at a suitable $\psi$ in Equation (\ref{eq:isogeny_seq}), the algorithm performs a walk on an isogeny graph that is defined as a tree of $j$-invariants connected by prime degree isogenies. The tree is built using a separate algorithm by Galbraith \cite{Gal99}.

\begin{notation}
We denote by $B= (\alpha, \beta \mid \mathbb{Q})$ the quaternion algebra over $\mathbb{Q}$ with basis $\{1, i, j, k\}$, subject to the multiplication rules $i^2 = \alpha$, $j^2 = \beta$, and $ij = -ji = k$, where $\alpha, \beta \in \mathbb{Q}^\times$.   
\end{notation}

The goal of the constructive Deuring correspondence is to find a supersingular elliptic curve $E$ such that $\mathrm{End}(E) \cong \mathcal{O}$ for a given maximal order $\mathcal{O}$ in $B_{p, \infty}$, the unique (up to isomorphism) quaternion algebra ramified at $p$ and $\infty$. For details on the entire correspondence algorithm and its implementation, see \cite{EPSV24}. We will focus on the curves and the distortion maps it constructs.
\begin{itemize}
    \item \textbf{Case $p \equiv 3 \pmod 4$:} The curve is $E_0: y^2 = x^3 + x$. The endomorphism ring is isomorphic to $\mathbb{Z} \oplus \mathbb{Z}i \oplus \mathbb{Z}\frac{i+j}{2} \oplus \mathbb{Z}\frac{1+k}{2}$ in the algebra $B_{p,\infty} = (-1, -p \mid \mathbb{Q})$.
    \item \textbf{Case $p \equiv 2 \pmod 3$:} The curve is $E_0 : y^2 = x^3 + 1$. The endomorphism ring corresponds to the maximal order $\mathcal{O}_{0}=\mathbb{Z}\oplus\mathbb{Z}\frac{1+i}{2}\oplus\mathbb{Z}\frac{j+k}{2}\oplus\mathbb{Z}\frac{i+k}{3}$ in $B_{p,\infty} = (-3, -p \mid \mathbb{Q})$. The basis elements rely on generators satisfying $i^2 = -3$ and $j^2 = -p$, where the element $(i-1)/2$ corresponds to the order-3 automorphism of $E_0$.
    
    \item \textbf{Case $p \equiv 1 \pmod 4$:} This case is more complex. The algorithm finds the smallest prime $\alpha \equiv 3 \pmod 4$ 
    such that $p$ is inert in $\mathbb{Q}(\sqrt{-\alpha})$. It then computes the Hilbert class polynomial $H_{-\alpha}$ to find a rational root $j \in \mathbb{F}_p$, constructing $E_0$ with this $j$-invariant. An isomorphism allows mapping the provided order $\mathcal{O}$ into the specific algebra representation used by $\mathcal{O}_0$ defined as \cite[Equation (1)]{EPSV24}.
\end{itemize}

Following the approach in \cite{EPSV24}, for primes $p \equiv 1 \pmod 4$, one can construct a supersingular elliptic curve $E_0$ such that its endomorphism ring contains an explicitly known endomorphism $\phi$ suitable for distortion. The construction proceeds as follows.
\begin{enumerate}
    \item Find the smallest prime $\alpha \equiv 3 \pmod 4$ such that $p$ remains inert in $\mathbb{Q}(\sqrt{-\alpha})$ (\textit{i.e.}, $\lp \frac{-\alpha}{p} \rp = -1$).
    \item Compute the Hilbert class polynomial $H_{-\alpha}(X)$ and find a rational root $j_0 \in \mathbb{F}_{p}$. Construct $E_0$ with $j(E_0) = j_0$.
    \item The endomorphism ring $\mathrm{End}(E_0)$ is isomorphic to a maximal order $\mathcal{O}_0$ in the quaternion algebra $B_{p, \infty} = (-\alpha, -p \mid \mathbb{Q})$. The algebra is generated by $1, i, j, k$ with $i^2 = -\alpha$, $j^2 = -p$, and $ij = -ji = k$.
    \item The distortion map $\phi$ corresponds to the element $\sqrt{-\alpha}$ in the algebra. It satisfies $\phi^2 = [-\alpha]$ and $\phi \pi = -\pi \phi$ where $\pi$ is the Frobenius.
\end{enumerate}

Computationally, $\phi$ is evaluated by composing a normalized isogeny of degree $\alpha$ with an isomorphism. If $E_0: y^2 = x^3 + ax + b$, one defines the auxiliary curve $E'_0: y^2 = x^3 + \alpha^2ax - q^3b$ and the isomorphism $\tau: E_0 \to E'_0$ defined by $(x,y) \mapsto (-\alpha x, \sqrt{-\alpha}^3 y)$.\svs

The composition $\phi' = \tau \circ \phi$ is a normalized isogeny $E_0 \to E'_0$ of degree $\alpha$, which can be computed efficiently by \cite{BMSS08}. The target endomorphism is then recovered as
\[ \phi = \tau^{-1} \circ \phi^\prime. \]

We list in Table \ref{table1} some known distortion maps for supersingular elliptic curves together with their respective reference in the literature. The three cases from the constructive Deuring correspondence are included. See also \cite{Moo09thesis} for nine specific curves and distortion maps with respect to the nine class number $1$ imaginary quadratic fields, as well as \cite{GM05} for more examples.
{\begin{table}[h]
\centering 
\caption{Distortions in some supersingular curves with respect to the torsion points defined over the ground field on the first column, for $p>3$.}\label{table1}
\setlength{\tabcolsep}{1.5pt}\resizebox{.97\textwidth}{!}{
\begin{tabular}{c c c c c c}
Field & Curve & Morphism & Conditions & Group order & Reference \\
\hline
\multirow{2}{*}{$\mathbb{F}_{p}$} & \multirow{2}{*}{$y^{2}=x^{3}+ax$} &
$(x,y)\mapsto(-x,iy)$ 
& \multirow{2}{*}{$p\equiv 3\pmod{4}$} & \multirow{2}{*}{$p + 1$} & \cite{JN03}, \\
& & $i^{2}=-1$ & & & \cite{EPSV24} \\
\\
\multirow{2}{*}{$\mathbb{F}_{p}$} & \multirow{2}{*}{$y^{2}=x^{3}+a$} & $(x,y)\mapsto(\zeta x,y)$ 
& \multirow{2}{*}{$p\equiv 2\pmod{3}$} & \multirow{2}{*}{$p + 1$} & \cite{JN03}, \\
& & $\zeta^{3}=1$ & & & \cite{EPSV24} \\
\\
\multirow{3}{*}{$\mathbb{F}_{p^{2}}$} & \multirow{2}{*}{$y^{2}=x^{3}+a$} & $(x,y)\mapsto\left(\omega\frac{x^{p}}{r^{(2p-1)/3}},\frac{y^{p}}{r^{p-1}}\right)$ & \multirow{3}{*}{$p\equiv 2\pmod{3}$} & \multirow{3}{*}{$p^{2}-p+1$} & \multirow{3}{*}{\cite{JN03}} \\
& \multirow{2}{*}{$a\not\in\mathbb{F}_{p}$} & $r^{2}=a,r\in\mathbb{F}_{p^{2}}$ & & & \\
& & $\omega^{3}=r,\omega\in\mathbb{F}_{p^{6}}$ & & & \\
\\
\multirow{3}{*}{$\mathbb{F}_p$} & \multirow{2}{*}{$y^2 = x^3 + ax + b$} & \multirow{2}{*}{$\phi^2 = [-q],$} & $p \equiv 1 \pmod 4,$ & \multirow{3}{*}{$p+1$} & \multirow{3}{*}{\cite{EPSV24}} \\
& \multirow{2}{*}{(via root of $H_{-\alpha}$)} &  \multirow{2}{*}{$\phi \pi = -\pi \phi$} &  $\alpha \equiv 3 \pmod 4,$  & & \\
& & & minimal $\lp \frac{-\alpha}{p} \rp = -1$ & & \\
\\
\multirow{3}{*}{$\F_{2^d} $} & \multirow{3}{*}{$y^2 + y = x^3 + x$} & $(x,y)\mapsto (x+s^2,y+sx+t)$ & & \multirow{3}{*}{$2^d + 1 \pm 2^{(d+1)/2}$} & \multirow{2}{*}{\cite{Moo09thesis},} \\
& & $s^2+s+1 =0, $& & & \multirow{2}{*}{\cite{GR04}} \\
& & $t^2 + t + s = 0$ & & & \\
\\
\multirow{3}{*}{$\F_{3^d}$} & \multirow{3}{*}{$y^2 = x^3 + x + 1$} & $(x,y)\mapsto (-x+s, iy)$ & & \multirow{3}{*}{$3^d + 1 \pm 3^{(d+1)/2}$} & \multirow{2}{*}{\cite{Moo09thesis},} \\
& & $s^3+2s+2 = 0, $& & & \multirow{2}{*}{\cite{GR04}} \\
& & $i^2 = -1$ & & & \\
\hline
\end{tabular}}
\end{table}}

%\newpage

\subsection{Algorithms for ordinary elliptic curves} In $2009$, Moody \cite{Moo09} stated that it would be interesting to have algorithms that efficiently compute distortion maps for ordinary elliptic curves in the case of $k(q,\ell)=1$. To date, no such algorithm appears to exist in the literature; however, recent developments offer some new perspectives, which we discuss below.\svs

In the work of Fouquet et al. \cite{FM18}, distortion maps from isogenies as in Equation (\ref{eq:isogeny_seq}) are modeled as closed walks on the crater of an isogeny volcano (for background on isogeny volcanoes, one may also consult \cite{Koh96,BCP24} for instance). Specifically, consider an ordinary curve $E$ and primes $m,\ell$. Let $\phi = \gamma \circ \psi$ arise as in Equation (\ref{eq:isogeny_seq}) on the crater of the $m$-volcano, and let $P$ be an $\ell$-torsion point that yields a descending isogeny to a curve $E^\prime$ (within the $\ell$-volcano). Under these conditions, $\phi$ fails to distort $P$ if and only if the $m$-craters of $E$ and $E^\prime$ have the same cardinality \cite[Proposition 5.1]{FM18}. While this approach does not offer a deterministic algorithm yet, it effectively serves as a volcano analogue to Galbraith and Rotger's strategy of isogeny walks in the supersingular case.\svs

In \cite{HWXZ13}, Hu et al. provide an efficient algorithm for generating ordinary elliptic curves of embedding degree $1$ via the Complex Multiplication method. Furthermore, their Theorem 2 establishes that the symmetric reduced Tate pairing $t(P,P)$ is non-degenerate on the curves produced by their Algorithm 1. Depending on the Legendre symbol (as detailed in our Proposition \ref{Proposition Legendre symbol}), this non-degeneracy holds either for all non-trivial points in $E[r]$, where $r$ is prime or for all points excluding two specific eigenspaces. Their construction ensures that $r$ is coprime to the elliptic curve's discriminant, ruling out the single eigenspace case.
Thus, the reduced Tate pairing can be used instead of the modified Weil pairing without the cost of explicitly evaluating a distortion map $\phi$. However, assuming $\mathrm{End}(E)=\Z[\phi]$, the proof of Theorem 2 reveals that the non-degeneracy is intrinsically linked to the underlying behavior of $\phi$: the reduced Tate pairing is non-degenerate precisely when $\phi$ acts as a distortion map on $P$.\svs

Furthermore, Hu et al. \cite{HXZ11} present two classes of ordinary elliptic curves that admit computable distortion maps. This result, formulated as Theorem $6$, provides a partial generalization of Verheul's theorem \cite[Thm. 3]{Ver04} for ordinary curves, regarding the hardness of the DLP on $\F_p^*$. The necessary conditions and a table of these distortion maps are listed below.
\begin{itemize} 
\item \textbf{Type I:} Let $\omega = (1+\sqrt{-3})/2$, and $p,r$ be primes such that $r\equiv 2\pmod 3$ and $p = r^2 + r + 1$. Let $\nu = \omega - r\overline{\omega}$ and let $(\cdot)_6$ be the $6$th power residue symbol taking values in $\{1,-1,\omega,-\omega,\omega^2,-\omega^2\}$. Assume for the coefficient $b$ that $(\frac{4b}{\nu})_6 = -\omega$.
%Recall that $c+d\omega \in \Z[\omega]$ has norm $N(c+d\omega) = (c^2-cd+d^2)$ and let $(\cdot)_6$ be the six residue symbol, that is $(\frac{a}\nu)_6 := a^{(N}$
\item \textbf{Type II:} Let $r\equiv 3\pmod 4$ and $p=16r^2 + 1$ be primes.
\end{itemize}

\begin{table}[h]
\centering 
\caption{Distortion maps for ordinary curves with respect to the entire group $E(\F_p)$.}\label{table2}
\setlength{\tabcolsep}{4pt}
\begin{tabular}{c c c c c c } 
Field & Curve & Morphism & Conditions & Group Order & Reference \\
\hline
$\mathbb{F}_{p}$ & $y^{2}=x^{3}+b$ & $(x,y)\mapsto(r\cdot x,y)$ & Type I & $r^2$&  \cite{HXZ11} \\
 $\F_p$&$y^2 = x^3 - x$ & $(x,y)\mapsto (-x,4r \cdot y)$& Type II & $16r^2$& \cite{HXZ11}  \\
\end{tabular}
\end{table}

There are a few more examples, one may for instance use ordinary reduction on the curves in \cite[Prop. 2.3.1]{Sil94} as Charles did in \cite[Example 3.2]{Cha06}.

\begin{remark}
    In the case of $\ell$ dividing the conductor, as in Example \ref{example_conductor}, what worked as a strategy for a distortion map $\phi$ on some points of $E[\ell]$ was to pick a point of order $\ell^2$ for the associated isogeny in (\ref{eq:isogeny_seq}). This resulted in $\phi^2 = 0$ on $E[\ell]$. We can showcase this similarly, in \cite[Example 3.4]{Cha06}, contradicting the claim that $\mathrm{End}(E)\pmod2 \cong \Z/2\Z$. The ordinary curve is $y^2 = x^3 + 11x + 4$ over $\F_{13}$. Let $\F_{13^6} = \F_{13}[X]/(X^6 + 10X^3 + 11X^2 + 11X + 2)$ generated by $z$. The $4$-torsion point $P=(4, 3z^5 + 9z^4 + 7z^3 + 12z + 5)$ in $E(\F_{13^6})$ yields an isogeny $\psi:E \rightarrow E^\prime : y^2 = x^3 + 8x + 9$. There is an isomorphism $\gamma :E^\prime \rightarrow E$ given by $(x,y)\mapsto (4x,-5y)$. Then, $\phi := \gamma \circ \psi $ distorts the $2$-torsion points $(6,0),(9,0)$ by mapping them to $(11,0)$, and maps $(11,0)$ to the point at infinity.
\end{remark}

\section{Distortion maps in higher dimension} \label{Section 6}

For higher dimensional abelian varieties in general (see \cite{Mil86}), there is also a notion of a distortion map which has the same properties as in the one-dimensional case. If $A$ is an abelian variety and $A^{\vee}$ denotes its dual, the Weil pairing is a bilinear map
$$e_m: A[m] \times A^{\vee}[m] \rightarrow \mu_m,$$
and if we are given a principal polarization $\psi: A\longrightarrow A^{\vee}$, we may define:
$$e_m^{\psi}: A[m]\times A[m]\rightarrow \mu_m,$$ by the formula $e_m^{\psi}(P,Q)=e_m(P,\psi(Q))$, for $P, Q \in{A[m]}$.

In the case of jacobians of higher genus curves, the generalization of distortion maps is for instance developed in the work \cite{GPRS09}. Let us consider $C$ to be a curve of genus $g\geqslant 1$. Its jacobian $\Jac(C)$ is an abelian variety of dimension $g$. The Abel-Jacobi map provides an isomorphism $\psi: \Jac(C) \xrightarrow{\sim} \Jac(C)^{\vee}$, in other words a principal polarization for $\Jac(C)$.\svs

Similarly to the elliptic curve case, there is a notion of a distortion map for a jacobian of a higher genus curve:
\begin{definition}
    Let $m\geqslant 1$ be a natural integer. Let $D \in \Jac(C)[m]$. An endomorphism $\varphi \in \End(\Jac(C))$ is called a \textit{distortion map for} $D$ if $e_m^{\psi}(D, \varphi(D))\neq 1$.
\end{definition}

In the article \cite{GPRS09} the authors focus on curves with supersingular jacobians:
\begin{definition}
    An abelian variety $A$ of dimension $g\geqslant 1$ is called \textit{supersingular} if it is isogenous (over $\overline{\mathbb{F}}_q$) to $E^g$ for some supersingular elliptic curve $E$.
\end{definition}

\begin{remark}
    In dimension $g$, the subgroup $A[p]$ where $p$ is the characteristic of a basefield is an $\mathbb{F}_p$-module of rank $0 \leqslant r \leqslant g$. A supersingular abelian variety always has $p$-rank $r=0$, and an ordinary abelian variety has $p$-rank $r=g$, but there are also intermediate cases when $g\geqslant 2$. In the case of elliptic curves (dimension $g=1$), there are no intermediate cases, and in the ordinary case it is always true that $E(\overline{\mathbb{F}}_p)[p] \cong \Z/p\Z$.
\end{remark}

\begin{remark}
    One can show that in the supersingular case $\End(A)\otimes \mathbb{Q}$ is a central simple algebra over $\mathbb{Q}$ of dimension $(2g)^2$.
\end{remark}

Considering that Tate's Theorem \ref{Theorem Tate} (from \cite{Tat66}) is originally proved for general abelian varieties over finite fields, the argument used in the proof of Lemma \ref{lemma_surjectivity} can be adapted to the higher dimension case to obtain the following result:
\begin{theorem}[\cite{GPRS09}, Theorem 2.1]
    Let $A$ be a supersingular abelian variety of dimension $g$ over $\mathbb{F}_q$,
and let $r$ be a prime not equal to the characteristic of $\mathbb{F}_q$. Assume $\psi: A\rightarrow A^{\vee}$ is a principal polarization. For every non-trivial
element $D \in A(\mathbb{F}_q)[r]$, there exists $\phi \in \End(A)$ such that
$e_r^{\psi}(D, \phi(D)) \neq 1$.
\end{theorem}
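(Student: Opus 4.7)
The plan is to adapt the argument of Lemma~\ref{lemma_surjectivity} to the higher-dimensional setting, by first establishing surjectivity of the restriction map
$$\mathrm{res}_r : \End(A) \longrightarrow \End_{\F_r}(A[r]), \qquad \phi \longmapsto \phi\vert_{A[r]},$$
and then exploiting the non-degeneracy of the polarized Weil pairing $e_r^\psi$ to locate a suitable target for $D$. Once surjectivity is in hand, the argument becomes linear-algebraic and essentially identical to the one-dimensional case.

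For the surjectivity, I would proceed as in Lemma~\ref{lemma_surjectivity}. Let $K = \F_{q^k}$ be a finite extension of $\F_q$ over which every geometric endomorphism of $A$ is defined, so that $\End(A) = \End_K(A)$. The $q^k$-th power Frobenius $\pi_{q^k} \in \End_K(A)$ commutes with every $K$-rational endomorphism, and therefore lies in the center of $\End(A)$. Since $A$ is supersingular, $\End(A) \otimes \Q$ is a central simple algebra over $\Q$ of dimension $(2g)^2$, and its center is $\Q$. As $\pi_{q^k}$ is integral over $\Z$, it must therefore lie in $\Z \subseteq \End(A)$. Consequently $\pi_{q^k}$ acts on $T_r(A)$ as a scalar, and since the topological generator of $\Gal(\overline{\F}_q/\F_{q^k})$ acts on $T_r(A)$ through $\pi_{q^k}$, every $\Z_r$-linear endomorphism of $T_r(A)$ is automatically Galois-equivariant. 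Tate's theorem for abelian varieties (\cite{Tat66}) then yields an isomorphism
$$\End(A) \otimes \Z_r \xrightarrow{\sim} \End(T_r(A)),$$
and reduction modulo $r$ produces the desired surjection onto $\End_{\F_r}(A[r])$.

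With surjectivity in hand, I would invoke the non-degeneracy of $e_r^\psi$, which follows from the fact that $\psi$ is a principal polarization. The non-trivial element $D \in A(\F_q)[r]$ then admits some $D' \in A[r]$ with $e_r^\psi(D, D') \neq 1$. Since $r \neq \characteristic(\F_q)$, the group $A[r]$ is a free $\F_r$-module of rank $2g$; completing $\{D\}$ to an $\F_r$-basis $\{D, v_1, \ldots, v_{2g-1}\}$, I define $\lambda \in \End_{\F_r}(A[r])$ by $\lambda(D) = D'$ and $\lambda(v_i) = 0$. Surjectivity of $\mathrm{res}_r$ then produces $\phi \in \End(A)$ with $\phi\vert_{A[r]} = \lambda$, and consequently $e_r^\psi(D, \phi(D)) = e_r^\psi(D, D') \neq 1$.

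I expect the main technical point to be the centrality step: verifying that $\pi_{q^k}$ actually lies in $\Z$ after passing to $K$. This relies on the specific structure of the endomorphism algebra of a supersingular abelian variety recalled in the remark preceding the statement, combined with the integrality of Frobenius. Once this is secured, Tate's theorem supplies the Galois-equivariant isomorphism with no extra work, and the final construction of $\lambda$ is routine.
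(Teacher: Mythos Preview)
Your proposal is correct and follows essentially the same route the paper indicates: adapt Lemma~\ref{lemma_surjectivity} via Tate's theorem for abelian varieties, using that supersingularity forces the center of $\End(A)\otimes\Q$ to be $\Q$ so that some Frobenius power is scalar. The paper does not spell out the passage from surjectivity of $\mathrm{res}_r$ to the existence of $\phi$; your use of non-degeneracy of $e_r^{\psi}$ to choose a target $D'$ and then lift the linear map $D\mapsto D'$ is a clean and correct way to conclude.
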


In \cite{RS02} it is shown that distortion maps in genus $\geqslant3$ have no interest for applications in cryptography. The case of genus two remains interesting, and supersingular jacobian surfaces are studied in \cite{GPRS09} by a case-by-case analysis. In particular, the article \cite{GPRS09} provides constructive ways to obtain a distortion map in the case where the embedding degree is $\geqslant 4$.

\printbibliography

\end{document}